\documentclass[11pt]{amsart}
\usepackage[dvips]{graphicx}
\usepackage{epsfig}
\usepackage{amsmath}
\usepackage{amssymb}
\usepackage[usenames, dvipsnames]{color}
\usepackage[pagebackref]{hyperref}
\usepackage{verbatim}
\usepackage[normalem]{ulem}

\theoremstyle{plain}
\newtheorem{theorem}{Theorem}[section]
\newtheorem{lemma}[theorem]{Lemma}

\theoremstyle{definition}
\newtheorem{definition}[theorem]{Definition}

\theoremstyle{remark}
\newtheorem{remark}[theorem]{Remark}

\def\al{\alpha}

\def\O{\Omega}

\def\be{\begin{equation}}
\def\ee{\end{equation}}
\def\bes{\begin{equation*}}
\def\ees{\end{equation*}}
\def\bali{\begin{aligned}}
\def\eali{\end{aligned}}
\def\al{\begin{aligned}}
\def\eal{\end{aligned}}

\def\2O{\underline{\O}}

\numberwithin{equation}{section}

\makeatletter
\def\dashint{\operatorname%
{\,\,\text{\bf--}\kern-.98em\DOTSI\intop\ilimits@\!\!}}
\makeatother

\begin{document}
\title[concavity of entropy]{ concavity and other properties of the entropy on manifolds}
\author[Fu]{Xuenan Fu}
\address{School of Mathematical Sciences, East China Normal University, 500 Dongchuan Road, Shanghai 200241, P. R. of China,
E-mail address:
xuenanfu97@163.com,
}

\author[Lu]{Juanling Lu}

   \address{School of Mathematical Sciences, East China Normal University, 500 Dongchuan Road, Shanghai 200241, P. R. of China,
 E-mail address:    lujuanlingmath@163.com,}

 \author[Zhang]{Qi S. Zhang}
\address{Department of Mathematics, University of California, Riverside, CA 92521, USA, E-mail address: qizhang@math.ucr.edu.}

\date{}

\begin{abstract}
In this paper,  we establish systematic estimates for the entropy and its density on Riemannian manifolds, focusing on the more challenging  cases where the Ricci curvature changes sign or the boundary is nonconvex. For example,
the refined second  law of thermodynamics states that the entropy  in a compact  domain in $\mathbb R^n$ is increasing in time, furthermore, it is concave if the domain is convex. The concavity property is equivalent to the property that the Fisher information is decreasing, which also holds for convex domains in a Riemannian manifold with nonnegative Ricci curvature (cf. \cite{NiLei}).  In view of the wide application of entropy in mathematics, information theory, physics, etc., there is certain desire in the community to extend the property to broader settings, especially to the case with nonconvex boundary (see e.g. \cite[p. 3]{CFM}). Here, we prove  that the refined second law still holds if the domain is not too far from convex and the negative part of the Ricci curvature is not too large, in an explicit, nonperturbative sense, thus realizing some of the expectations.

The proof is based on a new second order log Poincar\'e inequality that does not require explicit curvature conditions of the manifold.  If the negative part of the Ricci curvature is too large, a counterexample to the concavity is given. Some other related estimates for the entropy density (Hamilton type estimates) are also proven.
\end{abstract}

\maketitle
\setcounter{tocdepth}{1}
\tableofcontents

\vspace{0.5cm}
\section{\bf{Introduction}}
\subsection{Concavity of Entropy and second order log  Poincar\'e Inequalities}

Let $(\mathbf{M}, g)$ be a compact Riemannian manifold with $C^2$ boundary and $u \in C^\infty(\mathbf{M} \times (0,\infty))$ be a positive solution of the  heat equation with Neumann boundary conditions
\begin{equation}\label{eq:heat eq}
\begin{cases} 	
	(\Delta - \partial_t) u = 0 & \text{in } \mathbf{M} \times (0,\infty),\\ 	\partial_\nu u = 0 & \text{on } \partial \mathbf{M} \times (0,\infty),
\end{cases}
\end{equation}
where $\nu$ denotes the outward unit normal vector on $\partial \mathbf{M}$.
We recall the entropy
\begin{equation}\label{equ:Et}
E(t) := -\int_{\mathbf{M}} u(x,t) \ln u(x,t) \, d\mathrm{vol}_g,
\end{equation}
which is often referred to as the Boltzmann-Shannon-Nash  entropy. It is a fundamental concept in thermodynamic theory and mathematics.

Taking the time derivative of the entropy $E(t)$ yields the Fisher information
\begin{equation}\label{def:Fisher info}
\mathcal{I}(t) := \int_{\mathbf{M}} u(x,t) \, \bigl|\nabla \ln u(x,t)\bigr|^2 \, d\mathrm{vol}_g.
\end{equation}
By differentiating the Fisher information $\mathcal{I}(t)$, applying the Bochner formula, and integrating by parts, one obtains
\begin{equation}\label{eq:E''t}
\begin{aligned}  E''(t) &= -2\int_{\mathbf{M}} u\left[ |\nabla^2\ln u|^2 + \operatorname{Ric}(\nabla\ln u, \nabla\ln u) \right]\,d\mathrm{vol}_g \\  &\quad - 2\int_{\partial \mathbf{M}} u\,\mathrm{II}\left( \nabla^{\partial}\ln u, \nabla^{\partial}\ln u \right)\,dA_g,
 \end{aligned}
\end{equation}
where $\nabla^{\partial}$ denotes the gradient on $\partial \mathbf{M}$ with respect to the induced metric, and $\mathrm{II}$ is the second fundamental form defined by
\[
\mathrm{II}(X, Y) = g(\nabla_X \nu, Y)
\]
for $C^1$ vector fields $X, Y$ tangent to $\partial \mathbf{M}$. The detailed calculations can be found in \cite[p.~92]{NiLei}.

If the manifold has nonnegative Ricci curvature and the boundary is convex ($\mathrm{II}\geq0$), then $E''(t) \leq 0$, namely the entropy is concave in time. This is the result of Ni \cite{NiLei} more than two decades ago.  In view of the wide application of entropy in mathematics, information theory, physics, etc., it is desirable to extend the property to broader settings, which is a goal of the current paper. Note that the negative part of the Ricci curvature and nonconvex part of the boundary will create integral terms with positive sign in \eqref{eq:E''t}, which cannot be eliminated easily. We will use a second order log Poincar\'e inequality to do the job.

In order to state the main results, we introduce some relevant notations and concepts. We use $\Delta$, $\nabla$ and $\nabla^2$ to denote the Laplace-Beltrami operator, gradient and Hessian with respect to the metric $g$ respectively. Also $d(x, y)$, $d\mathrm{vol}_g$ and $dA_g$ denote the Riemannian distance between $x$ and $y$, Riemannian volume element and boundary volume element. For any point $x \in \mathbf{M}$, let $\xi(x)$ denote the smallest eigenvalue of the Ricci tensor at $x$.
The negative part of the Ricci curvature is defined as
\begin{equation*}
	\operatorname{Ric}^-(x) = (-\xi(x))_+ = \max\{0, -\xi(x)\}.
\end{equation*}
Thus, \(\operatorname{Ric}^-:\mathbf M\to[0,\infty)\) is a nonnegative function describing the negative part of the smallest eigenvalue of the Ricci tensor. In particular, for every tangent vector \(X\in T_x\mathbf M\),
\[
\operatorname{Ric}(X,X)
\geq -\operatorname{Ric}^-(x)|X|^2.
\]
Since \(\mathbf{M}\) is compact, \(\operatorname{Ric}^-\) is bounded. We set
\[
\|\operatorname{Ric}^-\|_{L^\infty}
:= \sup_{x\in \mathbf{M}} \operatorname{Ric}^-(x).
\]
Consequently, for all \(x\in \mathbf{M}\) and \(X\in T_x\mathbf{M}\),
\begin{equation}\label{def:Ric}
	\operatorname{Ric}(X,X)
	\geq -\|\operatorname{Ric}^-\|_{L^\infty} |X|^2.
\end{equation}

For a compact Riemannian manifold \(\mathbf{M}\) with \(C^{2}\) boundary \(\partial \mathbf{M}\), one considers the Neumann Laplacian, whose eigenfunctions satisfy the Neumann boundary condition \(\frac{\partial \phi}{\partial \nu} = 0\) on \(\partial \mathbf{M}\). Let \(\lambda_1^{N}(\mathbf{M}, g) > 0\) denote the first nonzero Neumann eigenvalue. By the variational principle, it is given by
\begin{equation}\label{lambda1N}
\lambda_1^N(\mathrm{\bf{M}}, g) = \inf \left\{ \frac{\int_{\mathrm{\bf{M}}} \left| \nabla \phi\right| ^2 \,d\mathrm{vol}_g}{\int_{\mathrm{\bf{M}}} \left| \phi - \bar{\phi}\right| ^2 \,d\mathrm{vol}_g} : \phi \in W^{1,2}(\mathrm{\bf{M}}), \phi \not\equiv \text{const} \right\},
\end{equation}
where $\bar{\phi} = \frac{1}{\text{Vol}(\mathrm{\bf{M}})} \int_{\mathrm{\bf{M}}} \phi \,d\mathrm{vol}_g$ is the mean value of $\phi$ over $\mathrm{\bf{M}}$. If there is no confusion or $\partial \mathbf{M} = \emptyset$, we also use $\lambda_1$ to denote $\lambda^N_1$,  i.e.,
\begin{equation}\label{lambda1}
	\lambda_1 = \inf \left\{ \frac{\int_{\mathrm{\bf{M}}} \left| \nabla \phi\right| ^2 \,d\mathrm{vol}_g}{\int_{\mathrm{\bf{M}}} \phi^2 \,d\mathrm{vol}_g} : \phi \in W^{1,2}(\mathrm{\bf{M}}), \int_{\mathrm{\bf{M}}} \phi \,d\mathrm{vol}_g = 0 \right\} .
\end{equation}

Next, for any $u \in H^1(\mathrm{\bf{M}})$, we recall the standard trace inequality in e.g. \cite[p. 1032]{Klaus Ecker},
\begin{equation}\label{trace ineq}
	\int_{\partial \mathrm{\bf{M}}} u^2\,dA_g \le C_{\mathrm{tr}}(\mathrm{\bf{M}}, g) \int_\mathrm{\bf{M}} \bigl(|\nabla u|^2+u^2\bigr)\,d\mathrm{vol}_g.
\end{equation}
The optimal constant in this inequality is given by $C_{\mathrm{tr}}(\mathrm{\bf{M}}, g) = \lambda_{tr}^{-1}$, where $\lambda_{tr}$ is defined via the Rayleigh quotient minimization
\begin{equation}\label{Def:tr const}
\lambda_{tr}=\inf\limits_{u\not\equiv 0}\, \frac{\int_\mathrm{\bf{M}}(|\nabla u|^2+u^2)\,d\mathrm{vol}_g}{\int_{\partial \mathrm{\bf{M}}}u^2\,dA_g}.
\end{equation}
Notice that $\lambda_{tr} > 0$, as the numerator is strictly positive for any nontrivial $u$.
The minimizer $u$ of this quotient satisfies the following boundary value problem
$$
\begin{cases}
	-\Delta u+u=0, & \text{in }\mathrm{\bf{M}},\\[2mm]
	\partial_\nu u=\lambda_{tr} u, & \text{on }\partial \mathrm{\bf{M}}.
\end{cases}
$$
Consequently, the optimal trace constant is precisely determined by the lowest eigenvalue of a Steklov type eigenvalue problem with a mass term.

 The following is the first main theorem.
\begin{theorem}\label{th: concave1}
	Assume $(\mathbf{M}^n, g)$ is a compact Riemannian manifold.
	\begin{itemize}
		\item[(a)] Suppose $\mathbf{M}$ is closed (i.e., $\partial \mathbf{M} = \emptyset$) and 	\[
		\|\operatorname{Ric}^{-}\|_{L^{\infty}} \leq \frac{2\,\lambda_1(\mathbf{M},g)}{n\bigl(n+4\sqrt{n}+8\bigr)}.
		\]
		Let $u=u(x, t)$ be a positive solution of the  heat equation $\Delta u - \partial_t u = 0$ on $\mathbf{M}\times(0,\infty)$. Then $E''(t) \leq 0$ for all $t > 0$.
		\item[(b)] More generally, suppose $\mathbf{M}$ has $C^2$ boundary $\partial \mathbf{M}$ satisfying $\mathrm{II} \ge -\sigma g_{\partial\mathbf{M}}$ for a constant $\sigma \ge 0$
		and
		$$
		\|\operatorname{Ric}^{-}\|_{L^{\infty}} \leq \frac{2\lambda_1^N(\mathbf{M}, g)}{n\bigl(n+4\sqrt n+8\bigr)}
		-\frac{\sigma\lambda_1^N(\mathbf{M}, g)}{n\lambda_{tr}}-\frac{\sigma}{\lambda_{tr}}.
		$$
		 Let $u=u(x, t)$ be a smooth positive solution of  \eqref{eq:heat eq}. Then $E''(t) \leq 0$ for all $t > 0$.
	\end{itemize}

	In either case, the entropy is concave in time, implying the Fisher information is nonincreasing.
	\begin{remark}
Condition (b) in Theorem~\ref{th: concave1} implicitly implies that the nonconvexity parameter $\sigma$ is small enough, i.e.,
		\[
		\sigma\le
		\frac{2\lambda_1^N(M,g)\lambda_{\mathrm{tr}}}
		{(n+4\sqrt{n}+8)\bigl(\lambda_1^N(M,g)+n\bigr)}.
		\]
		Thus, entropy concavity remains valid when the boundary is sufficiently close to convex and the negative part of the Ricci curvature is suitably bounded.
	\end{remark}
	
	\begin{remark}\label{counter eg}
	As mentioned in the abstract, this result addresses an expectation in the community.  Moreover, the following example shows that entropy concavity may fail when \(|\operatorname{Ric}^{-}|\) is sufficiently large. Thus, some restriction on the negative Ricci curvature is necessary in general.
	
	Let $(\mathbf{N}^3,h)$ be a 3-dimensional closed hyperbolic manifold with constant sectional curvature -1. Then the 4-dimensional warped product manifold
	\[
 (\mathbf{M}^4,g)=\left(\mathbb{R}\times\mathbf{N}^3, ds^2+\cosh^2(s)h\right)
	\]
has constant sectional curvature -1. Hence
	\[
	\operatorname{Ric}_\mathbf{M}=-3g.
	\]
	Let $u$ be a positive solution to the heat equation $\partial_{t} u=\Delta_{g} u$ with $u_0$ as the initial value, i.e.,
	\[
	u(x,t)=\int_{\mathbf{M}} G(x,t,y)u_0(y) d\mathrm{vol}_g(y),
	\]
	where $G(x, t, y)$ is the heat kernel with pole at $y$.
	
	Define the function
	\[
	u_0=\frac{1}{\zeta}\cosh^{-\frac{7}{2}}s
	\]
	on $\mathbf{M}$, where
	\[
	\zeta=\operatorname{Vol}(N)\int_{\mathbb{R}}\cosh^{-\frac{1}{2}}s\,ds
	<\infty.
	\]
	Denote
	\[
	f_0=\ln u_0=\ln(\frac{1}{\zeta})-\frac{7}{2}\ln\cosh s.
	\]
	So we have
	\[
	\nabla f_0=-\frac{7}{2}\tanh s\,\partial_s.
	\]
	Thus we obtain
	\[
	\operatorname{Ric}\left(\nabla f_0,\nabla f_0\right)
	=-3g(-\frac{7}{2}\tanh s\,\partial_s, -\frac{7}{2}\tanh s\,\partial_s)
	=-\frac{147}{4}\tanh^2 s,
	\]
	\[
	|\nabla^2 f_0|^2=g^{ik}g^{jl}(f_0)_{ij}(f_0)_{kl}
	=\frac{49}{4}\left(\frac{1}{\cosh^{4} s}+3\tanh^4 s\right).
	\]
	This shows
	\[
	|\nabla^2 f_0|^2+\operatorname{Ric}\left(\nabla f_0,\nabla f_0\right)
	=\frac{49}{4}\frac{1}{\cosh^2 s}\,(1-4\tanh^2 s).
	\]
	Therefore
	\begin{align*}
		\int_M u_0\left(|\nabla^2 f_0|^2+\operatorname{Ric}\left(\nabla f_0,\nabla f_0\right)\right)d\mathrm{vol}_g
		&=\frac{49}{4}\frac{1}{\zeta}\int_{\mathbb{R}} \int_{\mathbf{N}} \cosh^{-\frac{5}{2}}s(1-4\tanh^2 s) d\mathrm{vol}_h ds=-\frac{7}{12},
	\end{align*}
	where we have used Fubini's theorem and $d\mathrm{vol}_g=\cosh^3 s~ds d\mathrm{vol}_h$.
	Substituting the above equality into the expression for $E''(t)$ in \eqref{eq:E''t} yields
	\[
	E''(0^+)=-2\left(-\frac{7}{12}\right)=\frac{7}{6}>0.
	\]
	
	Since the initial value $u_0$ is smooth, strictly positive, and $E''(t)$ is continuous near $t=0$, then there exists $\varepsilon > 0$ such that
	\[
	E''(t)>0,\qquad 0<t<\varepsilon.
	\]
	
	\end{remark}
	
\end{theorem}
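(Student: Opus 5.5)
Starting from \eqref{eq:E''t} and writing $f=\ln u$, we must show
\[
\int_{\mathbf M} u\,|\nabla^2 f|^2 \ge \|\operatorname{Ric}^-\|_{L^\infty}\int_{\mathbf M} u|\nabla f|^2 + \sigma\int_{\partial\mathbf M} u\,|\nabla^\partial f|^2 .
\]
The core tool is a second order log Poincar\'e inequality of the form
\[
\int_{\mathbf M} u|\nabla^2 f|^2 \ge c_n\lambda_1\int_{\mathbf M} u|\nabla f|^2, \qquad c_n=\frac{2}{n(n+4\sqrt n+8)},
\]
which needs no curvature hypothesis. To prove it, apply the Poincar\'e inequality \eqref{lambda1N} to $\phi=\sqrt u$ and to the components of $\sqrt u\,\nabla f$, or more concretely to $\phi=u^{1/2}|\nabla f|$. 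One has $\int|\nabla(u^{1/2}|\nabla f|)|^2\le \int u(\tfrac12|\nabla f|^2+|\nabla^2 f|)^2$. The term $\int u|\nabla f|^4$ is then controlled by $\int u|\nabla^2 f|^2$ through the integration by parts identity
\[
\int u|\nabla f|^4=-\int u|\nabla f|^2\Delta f-2\int u\,\nabla^2 f(\nabla f,\nabla f),
\]
which holds because $\nabla u=u\nabla f$ and the Neumann condition kills the boundary term. Bounding $|\Delta f|\le\sqrt n|\nabla^2 f|$ and applying Cauchy--Schwarz gives $\int u|\nabla f|^4\le (\sqrt n+2)^2\int u|\nabla^2f|^2$; this is where the factor $n+4\sqrt n+8\approx(\sqrt n+2)^2+4$ should come from. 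The mean subtracted in Poincar\'e must also be handled. Since $\int u\nabla f=\int\nabla u=0$, I will instead apply Poincar\'e componentwise to $\sqrt u\,\partial_i f$ in local frames, or use $\int(\phi-\bar\phi)^2\ge$ a correction controlled by the same quartic term. Fixing this mean-value step is the main technical obstacle, and it is where the exact constant will be produced. Using $\mathrm{Ric}\ge-\|\mathrm{Ric}^-\|$ then gives part (a) immediately.

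For part (b), the boundary term must be handled as well. By $\mathrm{II}\ge-\sigma g$, the boundary contribution is at most $\sigma\int_{\partial\mathbf M}u|\nabla f|^2$. Apply the trace inequality \eqref{trace ineq} to $\phi=u^{1/2}|\nabla f|$:
\[
\int_{\partial\mathbf M}u|\nabla f|^2\le\lambda_{tr}^{-1}\Bigl(\int|\nabla(u^{1/2}|\nabla f|)|^2+\int u|\nabla f|^2\Bigr).
\]
The gradient term is bounded as above by a multiple of $\int u|\nabla^2 f|^2$ plus $\int u|\nabla f|^4/4$, which is again absorbed. Reorganizing the constants, the gradient term contributes $\tfrac{\sigma\lambda_1}{n\lambda_{tr}}$-type losses after being converted back through the log Poincar\'e inequality, and the $\int u|\nabla f|^2$ term contributes $\sigma/\lambda_{tr}$. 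This matches the hypothesis in (b).

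One caution: in the boundary case the integration by parts identity for $\int u|\nabla f|^4$ picks up no boundary term since $\partial_\nu u=0$, but the Kato/Bochner steps must not introduce $\mathrm{II}$. I will therefore avoid Bochner entirely in proving the second order inequality and use only the Hessian directly, which is why no curvature appears there. Combining both estimates yields $E''(t)\le0$, and the Fisher information, being $E'$, is then nonincreasing.
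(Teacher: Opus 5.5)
There is a genuine gap, and it is in the step you flag yourself: the proposal never proves the second order log Poincar\'e inequality. Applying \eqref{lambda1N} to $\phi=u^{1/2}|\nabla f|=2|\nabla\sqrt u|$ only gives $\int\phi^2\le\lambda_1^{-1}\int|\nabla\phi|^2+\mathrm{Vol}(\mathbf M)^{-1}(\int\phi)^2$. Since $\phi\ge0$, the mean term is not a small correction. Cauchy--Schwarz bounds it only by $\int\phi^2$ itself, which makes the inequality empty.

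The mean term also cannot be dropped. On the round unit $S^2$ take $v=2+z$, so $u=v^2$. Then $|\nabla v|=\sin\theta$, and $\int|\nabla v|^2=8\pi/3$, whereas $\lambda_1^{-1}\int|\nabla|\nabla v||^2=2\pi/3$.

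The componentwise fix does not work either. On a general manifold there is no global frame, so the ``components'' of $\sqrt u\,\nabla f$ are not admissible test functions. Even for a Neumann domain in $\mathbb R^n$, $\int_\Omega\partial_i\sqrt u=\int_{\partial\Omega}\sqrt u\,\nu_i\neq0$. Your identity $\int u\nabla f=0$ concerns a different vector, and it too fails when there is a boundary.

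Relatedly, the leading factor $n$ in $n(n+4\sqrt n+8)$ cannot be ``produced at the mean-value step.'' If the mean could be ignored, your chain would give the constant $(n+4\sqrt n+8)/(2\lambda_1)$ with no factor $n$. That mismatch is itself a sign the argument is missing.

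The missing idea is to apply Poincar\'e to $v=\sqrt u$ itself and pair it with $\Delta v$. Integrating by parts (the boundary term vanishes since $\partial_\nu v=0$) gives $\int|\nabla v|^2=-\int(v-\bar v)\Delta v\le\lambda_1^{-1/2}\|\nabla v\|_2\|\Delta v\|_2$. Hence $\lambda_1\int|\nabla v|^2\le\int(\Delta v)^2\le n\int|\nabla^2 v|^2$. This is where the factor $n$ comes from, and the mean is harmless because $\int\bar v\,\Delta v=0$.

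Next, $\nabla^2v=v\bigl(\frac12\nabla^2 f+\frac14\, df\otimes df\bigr)$ gives $|\nabla^2 v|^2\le\frac12u|\nabla^2 f|^2+\frac18u|\nabla f|^4$. Your quartic bound $\int u|\nabla f|^4\le(\sqrt n+2)^2\int u|\nabla^2 f|^2$ is correct and is exactly the paper's. Combining them yields $\int|\nabla^2v|^2\le\frac{n+4\sqrt n+8}{8}\int u|\nabla^2f|^2$, and with $\int u|\nabla f|^2=4\int|\nabla v|^2$ the lemma follows.

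Once the lemma is in place, the rest of your plan coincides with the paper's argument. That includes applying the trace inequality to $u^{1/2}|\nabla f|$ and bounding the boundary term by $\lambda_{tr}^{-1}\bigl(\frac{n+4\sqrt n+8}{2}\int u|\nabla^2f|^2+\int u|\nabla f|^2\bigr)$. It also includes the constant bookkeeping that reproduces the hypothesis in (b).
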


\subsection{Density of Entropy (Hamilton Type Estimates) on Manifolds with Nonconvex Boundaries under Pointwise Ricci Curvature Bounds}

In 1993, R. Hamilton \cite{Hamilton} derived the following log-gradient estimate, which is essentially an estimate on entropy density.

\vspace{.3cm}
{\bf{Theorem}} (\cite{Hamilton}) {\it{Let $(\mathbf{M}, g)$ be a closed manifold whose Ricci curvature is bounded from below by $-K \le 0$. If $u=u(x, t)\leq A\in\mathbb R^1$ is a positive solution to the heat equation on $\mathbf{M}\times(0, \infty)$, then
$$t \frac{\left| \nabla u\right| ^2}{u} \le (1+2Kt)u \ln \frac{A}{u}.$$}}

On a compact manifold with smooth convex boundary, the proof is the same as in \cite{Hamilton}. When the boundary is not convex, Hamilton's estimate has not been  established yet, although related gradient estimates without the log functions have appeared recently in \cite{Sturm}. The second topic of the paper is to address this question. The idea is to use certain auxiliary functions to cancel out the bad terms caused by the boundary. To extend the estimate to nonconvex boundaries, we assume that $\partial \mathbf{M}$
satisfies the interior rolling $R$-ball condition.
\begin{definition}\label{Def: inte roll R}
	The boundary $\partial \mathrm{\bf{M}}$ is said to satisfy the interior rolling $R$-ball condition if for every point
	$p \in \partial \mathrm{\bf{M}}$ there exists a point
	$q \in \mathrm{\bf{M}}$ such that the geodesic ball $B_q(R/2)$ of radius $R/2$ centered at $q$ satisfies
	\[
B_q(R/2) \cap \partial \mathrm{\bf{M}}=	\{p\} \quad \text{and} \quad B_q(R/2) \subset \mathrm{\bf{M}}.
	\]
	
	In other words, at each boundary point one can place a geodesic ball of radius $R/2$ entirely inside $\mathrm{\bf{M}}$ that touches the boundary exactly at $p$.
\end{definition}
The next result extends  Hamilton's entropy density (gradient) estimate to manifolds with nonconvex boundary.

\begin{theorem}\label{Th: ham estimate}
	Let $(\mathrm{\bf{M}}^n, g)$ be an n-dimensional compact Riemannian manifold with $C^2$ boundary $\partial \mathbf M$, and suppose that $\partial \mathbf M$ satisfies the ``interior rolling $R$-ball" condition for some $R\in(0, 1]$. Assume that the Ricci curvature and the second fundamental form satisfy \[
	 \operatorname{Ric}_\mathrm{\bf{M}}\ge -K g
\qquad\text{and}\qquad
	 \mathrm{II}_{\partial \mathrm{\bf{M}}}\ge -L g_{\partial \mathrm{\bf{M}}},
	\]
	for some constants $K,L\ge0$. Let $u=u(x,t)>0$ be a bounded smooth solution of \eqref{eq:heat eq} on $\mathbf{M}\times(0,1]$. Define $
	A:=e\sup_{\mathbf{M}\times(0,1]}u.
	$ Then there exists a constant $\alpha>0$
	such that
	\[
	\frac{t |\nabla u|^2}{u}\le \alpha u \ln \frac{A}{u}
	\]
	on $\mathbf{M}\times(0,1]$, where $\alpha=\alpha(\operatorname{diam}(\mathrm{\bf{M}}), n, K, L, R)>0$.
\end{theorem}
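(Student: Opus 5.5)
Following Hamilton, we normalize so that $u\le A/e$, i.e.\ $\ln(A/u)\ge 1$, and set $f=\ln(A/u)\ge 1$. We consider the quantity
\[
\Phi=\frac{t\,|\nabla u|^2}{u}-\alpha\, u\ln\frac{A}{u}=u\bigl(t|\nabla f|^2-\alpha f\bigr).
\]
The goal is $\Phi\le 0$ via the maximum principle. In Hamilton's interior computation one has $(\Delta-\partial_t)(|\nabla u|^2/u)\ge -2K|\nabla u|^2/u$ and $(\Delta-\partial_t)(u\ln(A/u))=|\nabla u|^2/u$. Hence $(\Delta-\partial_t)\Phi\ge (\alpha-1-2Kt)|\nabla u|^2/u\ge 0$ for $t\le1$ once $\alpha\ge 1+2K$. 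This takes care of interior maxima.

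The difficulty is the boundary. At $\partial\mathbf M$ we have $\partial_\nu u=0$, so $\partial_\nu(u\ln(A/u))=0$. On the other hand,
\[
\partial_\nu|\nabla u|^2=2\nabla^2u(\nabla u,\nu)=-2\,\mathrm{II}(\nabla u,\nabla u)\le 2L|\nabla u|^2.
\]
So a boundary maximum cannot be excluded directly. To cancel this term we introduce an auxiliary weight $\psi$ built from the distance function to the boundary. The interior rolling $R$-ball condition gives regularity of $\rho=d(\cdot,\partial\mathbf M)$ in the collar $\{\rho<R/2\}$, with Hessian comparison bounds there depending on $K,L,R$. We take $\psi=1+\beta\,\eta(\rho)$, where $\eta$ is a cutoff profile with $\eta'(0)=1$, supported in $\rho<R/4$ and bounded. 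Note that $\partial_\nu\psi=-\beta\eta'(0)=-\beta$. We then consider
\[
\Psi=\psi\,\frac{t|\nabla u|^2}{u}-\alpha\,u\ln\frac{A}{u}.
\]
Choosing $\beta>L$ gives $\partial_\nu\Psi<0$ wherever $\nabla u\neq0$ on $\partial\mathbf M$. Therefore a positive maximum of $\Psi$ at a positive time cannot lie on the boundary; at the initial time $\Psi=-\alpha u\ln(A/u)<0$.

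At an interior maximum, we expand $(\Delta-\partial_t)\Psi$. New terms appear: $\Delta\psi\cdot t|\nabla u|^2/u$ and the cross term $2t\nabla\psi\cdot\nabla(|\nabla u|^2/u)$.
\begin{itemize}
\item The first is bounded by $C(n,K,L,R,\beta)\,t|\nabla u|^2/u$, using the Laplacian comparison for $\rho$ on the collar.
\item The cross term is handled through the identity $\nabla(|\nabla u|^2/u)=2\nabla^2u(\nabla u)/u-|\nabla u|^2\nabla u/u^2$. We use the critical-point equation $\nabla\Psi=0$ to replace $\psi\nabla(t|\nabla u|^2/u)$ by $\alpha\nabla(u\ln(A/u))-t|\nabla u|^2\nabla\psi/u$. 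We then absorb the Hessian part via Cauchy--Schwarz against the good term $2t\psi|\nabla^2u-\nabla u\otimes\nabla u/u|^2/u$ that appears in Hamilton's Bochner computation.
\item This leaves terms of the form $C t|\nabla f|^2\,u\,|\nabla f|$. These are controlled at the maximum point, where $t|\nabla f|^2\approx\alpha f/\psi$, by the quadratic good term $t|\nabla f|^4u$ when $|\nabla f|$ is large. When $|\nabla f|$ is bounded they are simply dominated by $\alpha|\nabla u|^2/u$.
\end{itemize}
Choosing $\alpha$ large in terms of $\beta,K,n,R$ and $\operatorname{diam}\mathbf M$ closes the argument and gives $\Psi\le0$. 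Since $\psi\ge1$, this yields the claim. The diameter enters only through the profile $\eta$ and the collar geometry.

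The main obstacle is this interior computation with the weight. Specifically, one must control the gradient cross term $\nabla\psi\cdot\nabla(|\nabla u|^2/u)$, which is of higher order in $|\nabla f|$. One must also check that the rolling-ball condition really gives $C^2$ control of $\rho$ up to $\partial\mathbf M$ on a uniform collar. If the Cauchy--Schwarz absorption turns out too lossy, the fallback is to replace $\psi$ by $e^{\beta\eta(\rho)}$ and work with $\ln$ of the quotient $t|\nabla f|^2/f$. In that form the weight contributes only additively, namely $\beta\nabla\eta$, to the gradient equation, and the cross term becomes linear in $|\nabla f|$.
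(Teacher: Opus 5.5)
Your boundary step matches the paper's: the paper uses the weight $1+\phi$, $\phi=\psi(r/R)$ with $\psi'(0)=3L+1$. Eliminating the cross term through $\nabla\Psi=0$ is also equivalent to the paper's drift operator $\mathcal L=\Delta-\partial_t-2\langle\nabla\phi/(1+\phi),\nabla\,\cdot\,\rangle$. One small fix: since $\psi=1$ on $\partial\mathbf M$ and $\partial_\nu|\nabla u|^2\le 2L|\nabla u|^2$, you need $\beta>2L$, not $\beta>L$.

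The genuine gap is the interior closing step. After the substitution, with $f=\ln(A/u)$, the dangerous remainder is
\[
\frac{2\alpha}{\psi}(f-1)\langle\nabla\psi,\nabla u\rangle=-\frac{2\alpha}{\psi}(f-1)\,u\,\langle\nabla\psi,\nabla f\rangle ,
\]
which has size $\alpha|\nabla\psi|\,u f|\nabla f|$. The only good terms are $\alpha u|\nabla f|^2$ and the Bochner term $2t\psi u|\nabla^2 f|^2$. The latter does not dominate $|\nabla f|^4$, and your Cauchy--Schwarz step already consumes it. So there is no ``$t|\nabla f|^4u$'' term.

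Young's inequality leaves $2\alpha\psi^{-2}|\nabla\psi|^2uf^2$. At a positive maximum you only know $t\psi|\nabla f|^2>\alpha f$, so this is absorbed only if $t\ln(A/u)\le C$ at that point. Your dichotomy does not avoid this. The term $2tu|\nabla\psi||\nabla f|^3$ is dominated by $\alpha u|\nabla f|^2$ only when $|\nabla f|\lesssim\alpha/t$. Even at a first touching point, where $t\psi|\nabla f|^2=\alpha f$, that condition reads $tf\lesssim\alpha$.

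Your fallback has the same defect. The quantity $h=|\nabla f|^2/f$ does satisfy $(\Delta-\partial_t)h-2(1-f^{-1})\langle\nabla f,\nabla h\rangle\ge h^2-2Kh$. But at a maximum of $e^{\beta\eta}th$ the drift term becomes $-2(1-f^{-1})\beta h\langle\nabla f,\nabla\eta\rangle$, of size $\beta|\nabla\eta|h^{3/2}f^{1/2}$. Absorbing it into $h^2/2$ leaves $C\beta^2hf$, so you only get $th\lesssim 1+t+tf$.

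What is missing is an a priori lower bound on $u$ of the form $t\ln(A/u)\le C$. The paper obtains it from J.~Wang's Li--Yau type Harnack inequality for the Neumann problem under $\operatorname{Ric}\ge-K$, $\mathrm{II}\ge-L$ and the rolling ball condition (Lemma~\ref{lem:Harnack}). This, not the profile or the collar, is where $\operatorname{diam}(\mathbf M)$ enters.

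It also forces a localization in time that your setup lacks. With $A=e\sup_{\mathbf M\times(0,1]}u$, no bound on $t\ln(A/u)$ uniform in $u$ is possible: a tall narrow initial bump plus a small constant makes $\ln(A/u(\cdot,1/2))$ arbitrarily large everywhere. The paper therefore proceeds as follows.
\begin{itemize}
\item It works on $\mathbf M\times[T/2,T]$ with $\tau=t-T/2$ and $A_T=e\sup_{\mathbf M\times[T/2,T]}u$.
\item It applies the Harnack inequality from a maximum point $(y,T/4)$ of $u(\cdot,T/4)$ to get $\tau\ln(A_T/u)\le\frac12(1+C_1\ln 4)+\lambda D^2+\frac38 C_2$.
\item It chooses $\alpha$ large compared with $\sup_{\mathbf M}\bigl(|\nabla\phi|^2/(1+\phi)\bigr)$ times this bound, plus the curvature and collar constants. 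The inequality forced at an interior positive maximum then becomes impossible.
\item It uses $A_T\le A$ and $t=2\tau$ at $t=T$ to conclude, with $2\alpha$ in place of $\alpha$.
\end{itemize}
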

\subsection{Gradient Bound and Fisher Information under Integral Curvature Condition}
All of the preceding results have been established under pointwise Ricci curvature bounds. In this final part, we obtain a bound of the Fisher information under an integral Ricci curvature bound together with a volume noncollapsing condition. This is based on Wang's gradient estimate \eqref{cite: wang1} for $p=2$. Recall that Wang \cite{Wang FengYu} established a useful gradient estimate: if the Riemannian manifold has pointwise Ricci lower bound i.e., \(\operatorname{Ric} \ge -K\), $K\geq 0$,  then the following estimate
\begin{equation}\label{cite: wang1}
	|\nabla P_t f|^p \le e^{pKt} P_t (|\nabla f|^p),\qquad \forall t \ge 0
\end{equation}
holds for all \(f \in C^1(M)\), \(p\in[1, \infty)\), where $P_t$ denotes the heat semigroup, see \cite[Cor. 3.2.6]{Wang FengYu}.

 There has been interest in extending many classical results from pointwise curvature conditions to integral curvature conditions. For example, Petersen and Wei \cite{Petersen and Wei} extended the classical volume comparison theorem. Recently, the authors of \cite{Zhang and Zhu} established Li-Yau type gradient bounds on manifolds for which the negative part of the Ricci curvature satisfies $|\operatorname{Ric}^-|\in L^p$ for $p > n/2$, under a volume noncollapsing condition, see also \cite{X. Ramos}. We will use techniques developed in their work.

\begin{theorem}\label{Integral GE}
	Let $(\mathbf{M}, g)$ be an $n$-dimensional compact Riemannian manifold without boundary satisfying the following conditions:
	\begin{itemize}
		\item[(a)] $\displaystyle \int_{\mathbf{M}} |\operatorname{Ric}^-|^p \, d\mathrm{vol}_g = \sigma < \infty$, where $p > n/2$ and $\operatorname{Ric}^-$ denotes the negative part of the Ricci curvature;
		\item[(b)] noncollapsing condition: $|B(x,r)| \ge \rho \, r^n$ for all $x\in\mathbf{M}$, $0 < r \le 1$ and some constant $\rho > 0$.
	\end{itemize}
	For any $f \in C^{\infty}(\mathbf{M})$, define
	\[
	u(x,t) := \int_{\mathbf{M}} G(x, t, y) \, f(y) \, d\mathrm{vol}_g(y), \qquad
	w(x,t) := \int_{\mathbf{M}} G(x, t, y) \, |\nabla f|^2(y) \, d\mathrm{vol}_g(y),
	\]
	where $G(x, t, y)$ is the heat kernel with pole at $y$. Then the estimate
	\[
	|\nabla u|^2 \le \frac{1}{\underline{J}(t)} \, w
	\]
	holds, or equivalently,
	\[
	\left|\nabla_x \int_{\mathbf{M}} G(x, t, y) \, f(y) \, d\mathrm{vol}_g(y)\right|^2
	\le \frac{1}{\underline{J}(t)} \int_{\mathbf{M}} G(x, t, y) \, |\nabla f|^2(y) \, d\mathrm{vol}_g(y), \qquad \forall t>0.
	\]
	Here $\underline{J}(t)$ is given explicitly by
	\[
	\underline{J}(t) = 2^{-\frac{1}{a-1}} \,
	\exp\!\left( -(a-1)^\frac{n}{2p-n}\,
	\bigl[4(\sigma \hat{C}(t))^{1/p}\bigr]^{\frac{2p}{2p-n}} \, t \right),
	\]
	where $a = 5\delta^{-1}$, $0 < \delta \le \frac{5}{2}$, and $\hat{C}(t)$ is a monotonically increasing function appearing in the
	Gaussian upper bound
	$$
	G(x, t, y)\leq \frac{\hat{C}(t)}{t^{n/2}}e^{-\bar{c}d^2(x, y)/t},\qquad t\in(0, \infty),
	$$
	for the heat kernel under the above integral Ricci curvature bound and
	volume noncollapsing condition.
	Here $\bar{c}>0$ is an absolute constant.

\end{theorem}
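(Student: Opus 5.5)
The natural route is a Bakry--Émery-type interpolation along the semigroup combined with the Zhang--Zhu technique for integral Ricci bounds. Fix $(x,t)$ and for $s\in[0,t]$ consider
\[
\Phi(s) := P_s\bigl(J(t-s)\,|\nabla P_{t-s} f|^2\bigr)(x),
\]
where $J$ is a positive function with $J(0)=1$ to be chosen. Then $\Phi(0)=J(t)|\nabla u|^2(x,t)$ and $\Phi(t)=P_t(|\nabla f|^2)=w(x,t)$. Equivalently, one can work with the space--time function
\[
Q := J(t)|\nabla u|^2 - w ,
\]
which has $Q(\cdot,0)\le 0$. The task is to show $Q\le 0$ for all $t>0$ with $J$ bounded below by $\underline J(t)$.

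\textbf{Step 1: the differential inequality.} By Bochner's formula,
\[
(\Delta-\partial_t)|\nabla u|^2 = 2|\nabla^2 u|^2 + 2\operatorname{Ric}(\nabla u,\nabla u)\ge -2\operatorname{Ric}^-|\nabla u|^2 .
\]
Since $w$ solves the heat equation,
\[
(\Delta-\partial_t)Q \ge -\bigl(2\operatorname{Ric}^- J + J'\bigr)|\nabla u|^2 .
\]
In the pointwise case one takes $J=e^{-2Kt}$, which recovers Wang's estimate \eqref{cite: wang1} with $p=2$. Here $\operatorname{Ric}^-$ is only in $L^p$, so it cannot be absorbed by a function of $t$ alone.

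\textbf{Step 2: a space--time weight.} Following \cite{Zhang and Zhu}, replace $J(t)$ by $J(x,t)$ solving the linear problem
\[
\Delta J - \partial_t J - \frac{a\,|\nabla J|^2}{J} - 2\operatorname{Ric}^- J = 0,\qquad J(\cdot,0)=1 ,
\]
with $a=5\delta^{-1}$. With this choice, the cross term $4\langle \nabla J, \nabla|\nabla u|^2\rangle$-type contribution in $(\Delta-\partial_t)(J|\nabla u|^2)$ is absorbed by Cauchy--Schwarz against $2J|\nabla^2u|^2$ and the $a|\nabla J|^2/J$ term; the parameter $\delta$ is exactly the Cauchy--Schwarz weight. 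One then gets $(\Delta-\partial_t)Q\ge0$, and the maximum principle on the closed manifold gives $Q\le0$, that is, $J|\nabla u|^2\le w$.

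\textbf{Step 3: lower bound for $J$.} Substitute $J=h^{-1/(a-1)}$. This linearizes the equation to
\[
\Delta h - \partial_t h + 2(a-1)\operatorname{Ric}^- h = 0 ,\qquad h(\cdot,0)=1 .
\]
The required bound $J\ge\underline J$ then amounts to $h\le 2\exp\bigl(C t\bigr)$ with the stated constant. Write $h$ by Duhamel:
\[
h = 1 + 2(a-1)\int_0^t\!\!\int G\,\operatorname{Ric}^- h .
\]
Using the Gaussian upper bound $G\le \hat C(t)t^{-n/2}e^{-\bar c d^2/t}$ and Hölder with $\|\operatorname{Ric}^-\|_{L^p}=\sigma^{1/p}$, one obtains
\[
\sup h \le 1 + c(a-1)(\sigma\hat C)^{1/p}\, t^{1-n/(2p)} \sup h ,
\]
which gives $\sup h\le 2$ on a short interval $t\le T_0$ with
\[
T_0 \sim \bigl[(a-1)\,4(\sigma\hat C)^{1/p}\bigr]^{-2p/(2p-n)} .
\]
Iterating via the semigroup property yields exponential growth $h\le 2e^{t/T_0}$ up to the constant normalizations. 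This produces exactly the form of $\underline J(t)$; the powers $\frac{2p}{2p-n}$ and $\frac{n}{2p-n}$ arise from $T_0$ raised to $-1$.

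\textbf{Main obstacle.} The hardest part is Step 3. The difficulty is controlling the Duhamel integral uniformly in $x$ using only $L^p$ integrability, and tracking how the Gaussian constant $\hat C(t)$, which depends on the noncollapsing condition (b), enters. One must also make sure that the Cauchy--Schwarz absorption in Step 2 really yields the exponent $-1/(a-1)$ with $a=5/\delta$.
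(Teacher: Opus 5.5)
Your proposal is correct and follows essentially the paper's argument. It uses the same Zhang--Zhu weight $J$ solving $\partial_t J=\Delta J-2|\operatorname{Ric}^-|J-\frac{5}{\delta}\frac{|\nabla J|^2}{J}$ with $J(\cdot,0)=1$, the same Young absorption of $2\langle\nabla J,\nabla|\nabla u|^2\rangle$ into $2J|\nabla^2u|^2$ (which is where $\delta\le 5/2$ enters), and the maximum principle. The only variation is that you apply the maximum principle to the difference $J|\nabla u|^2-w$, whereas the paper uses the quotient $F=J|\nabla u|^2/w$. The paper's drifted identity $(\Delta-\partial_t)F+2\langle\nabla w/w,\nabla F\rangle=w^{-1}(\Delta-\partial_t)(J|\nabla u|^2)$ shows these are the same inequality, and your version is slightly cleaner because it avoids the $w>\epsilon$ regularization.

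For the lower bound $J\ge\underline{J}(t)$, the paper simply cites Zhang--Zhu; its later remark sketches the same substitution $h=J^{-(a-1)}$ and Duhamel argument you give. Your H\"older and iteration bookkeeping recovers the structure of $\underline{J}$ but not literally its displayed constant (for example, it picks up a factor $\frac{2p}{2p-n}$ from $\int_0^t s^{-n/(2p)}\,ds$). To get the exact formula, cite their computation.
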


Applying the preceding gradient estimate together with the definition of the Fisher information \eqref{def:Fisher info} yields the following estimate.
\begin{theorem}\label{th:Fisher info}
	Let $(\mathbf{M}^n, g)$ satisfy the assumptions of Theorem~\ref{Integral GE}. For a strictly positive \(u_0\in C^\infty(\mathbf{M})\), let $$u=u(x,t)=\int_\mathbf{M} G(x,t,y)u_0(y)\,d\mathrm{vol}_g(y)$$ be the positive solution of
	the heat equation $\Delta u - \partial_t u = 0$ on $\mathbf{M}\times(0,\infty)$ with initial value \(u(\cdot,0)=u_0\). Let $\mathcal I(t)$ be the corresponding Fisher information defined by \eqref{def:Fisher info}. Then for all $t > 0$,
	\[
	\mathcal{I}(t) \leq \frac{1}{\underline{J}(t)}\,\mathcal{I}(0),
	\]
	where $\underline{J}(t)$ is defined in Theorem~\ref{Integral GE}.
\end{theorem}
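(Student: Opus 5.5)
The plan is to apply Theorem~\ref{Integral GE} not to $u$ itself but to a rescaled square root of $u$, so that the pointwise gradient bound turns into an integral bound on $|\nabla u|^2/u$. The identity $u|\nabla\ln u|^2=4|\nabla\sqrt{u}|^2$ alone does not work, because $\sqrt{u}$ is not a caloric function. I will therefore use a Cauchy--Schwarz splitting that lets the heat semigroup act on $u_0$ and on $|\nabla u_0|^2/u_0$ separately.

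\textbf{Step 1 (pointwise bound).} Fix $(x,t)$. Take $f=u_0$ in Theorem~\ref{Integral GE}; this is not yet sufficient, since we need $|\nabla u|^2/u$. The way the gradient estimate is proved (a Bakry--Émery type interpolation $s\mapsto P_s(\cdot)$ along the semigroup) should give the stronger reverse-Jensen form
\[
\frac{|\nabla P_t u_0|^2}{P_t u_0}\le \frac{1}{\underline J(t)}\,P_t\Bigl(\frac{|\nabla u_0|^2}{u_0}\Bigr).
\]
If the theorem's proof does not directly yield this, I will recover it from the stated inequality by a weighted argument. For each $x$, apply Theorem~\ref{Integral GE} and use the representation $\nabla_x\int G\,u_0=\int \nabla_xG\, u_0$. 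Then write $\nabla u=\int G\,\nabla u_0$ formally, and apply Cauchy--Schwarz with weights $G u_0$:
\[
\Bigl|\int G\,\nabla u_0\Bigr|^2\le \int G u_0\cdot\int G\frac{|\nabla u_0|^2}{u_0}.
\]
The factor $1/\underline J$ measures the failure of the commutation $\nabla P_t=P_t\nabla$. The honest route is to redo the interpolation $\Phi(s)=P_s\bigl(|\nabla P_{t-s}u_0|^2/P_{t-s}u_0\bigr)$. Its derivative is $2P_s\bigl(v|\nabla^2\ln v|^2+v\,\mathrm{Ric}(\nabla\ln v,\nabla\ln v)\bigr)\ge -2P_s\bigl(\mathrm{Ric}^-\,|\nabla v|^2/v\bigr)$, where $v=P_{t-s}u_0$. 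This is exactly the same structure as the quantity $|\nabla v|^2$ handled in Theorem~\ref{Integral GE}, with $|\nabla v|^2$ replaced by $|\nabla v|^2/v$. The Gaussian heat kernel bound and the $L^p$ control of $\mathrm{Ric}^-$ (through Hölder and the choice of $a$) then enter identically and produce the same $\underline J(t)$.

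\textbf{Step 2 (integrate).} Integrate the pointwise bound over $\mathbf M$. Since $\mathbf M$ is closed, the heat semigroup preserves integrals: $\int_{\mathbf M}\int_{\mathbf M} G(x,t,y)h(y)\,dy\,dx=\int_{\mathbf M} h$. This gives
\[
\mathcal I(t)=\int\frac{|\nabla u|^2}{u}\le\frac{1}{\underline J(t)}\int\frac{|\nabla u_0|^2}{u_0}=\frac{1}{\underline J(t)}\mathcal I(0).
\]
Strict positivity and smoothness of $u_0$ make all quantities finite.

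\textbf{Main obstacle.} The main obstacle is Step 1: checking that the proof of Theorem~\ref{Integral GE} carries over verbatim to the convex functional $(a,b)\mapsto|b|^2/a$ in place of $|b|^2$. The key fact I will use first is the pointwise Bochner inequality $(\Delta-\partial_s)(|\nabla v|^2/v)\ge -2\,\mathrm{Ric}^-|\nabla v|^2/v$, which holds since the Hessian term $2v|\nabla^2\ln v|^2$ is nonnegative. Once this replaces the corresponding Bochner step for $|\nabla v|^2$, the rest of the argument (the Duhamel/Gronwall iteration using the heat kernel upper bound and the noncollapsing condition) proceeds unchanged.
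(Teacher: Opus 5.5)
\textbf{There is a genuine gap in Step 1.} Step 1 carries the entire proof, and none of the three routes you sketch for it works as stated.

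First, the pointwise bound $|\nabla P_tu_0|^2/P_tu_0\le \underline J(t)^{-1}P_t(|\nabla u_0|^2/u_0)$ cannot be extracted from Theorem~\ref{Integral GE}, which is an $L^2$-type estimate. Cauchy--Schwarz gives $(P_t|\nabla u_0|)^2\le P_tu_0\,P_t(|\nabla u_0|^2/u_0)$. But $P_t(|\nabla u_0|^2)\ge (P_t|\nabla u_0|)^2$, so the chain runs the wrong way; you would need an $L^1$ bound $|\nabla P_tf|\le C\,P_t|\nabla f|$, which is not available. The representation $\nabla u=\int G\,\nabla u_0$ is not valid on a curved manifold.

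Second, the interpolation $\Phi(s)=P_s(|\nabla v|^2/v)$ yields only $\Phi'(s)\ge-2P_s(V|\nabla v|^2/v)$, with $V=|\operatorname{Ric}^-|$ sitting inside $P_s$. The constant may depend on $V$ only through $\|V\|_{L^p}$, so this does not close by Gronwall.

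Third, the proof of Theorem~\ref{Integral GE} is a maximum principle for $J|\nabla u|^2/w$, not a Gronwall iteration, and it does not transfer verbatim. For the adapted quantity $F=J\,(|\nabla u|^2/u)/w$ with $w=P_t(|\nabla u_0|^2/u_0)$, one has $\nabla(u|\nabla\ln u|^2)=2u\,\nabla^2\ln u(\nabla\ln u)+u|\nabla\ln u|^2\nabla\ln u$. Hence the cross term $\frac2w\langle\nabla J,\nabla(|\nabla u|^2/u)\rangle$ now contains $2F\langle\nabla\ln J,\nabla\ln u\rangle$. This term is cubic in $\nabla\ln u$, has no sign, and cannot be absorbed by the Hessian term $\frac{2J}{w}u|\nabla^2\ln u|^2$.

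\textbf{How the paper avoids this.} It never needs a pointwise bound on $|\nabla u|^2/u$. Freeze $f=\ln u(\cdot,t)$ and set $d\mu=u_0\,d\mathrm{vol}_g$. The paper writes $\mathcal I(t)=-\int u|\nabla f|^2+2\int\langle\nabla f,\nabla u\rangle$. Using the symmetry of $G$ and integration by parts, this becomes $\mathcal I(t)=-\int P_t(|\nabla f|^2)\,d\mu+2\int\langle\nabla P_tf,\nabla\ln u_0\rangle\,d\mu$. Young's inequality with $\varepsilon=\underline J(t)$, together with Theorem~\ref{Integral GE} applied to this $f$ (not to $u_0$), makes the first two terms cancel, leaving $\underline J(t)^{-1}\mathcal I(0)$.

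\textbf{How to rescue your strategy.} If you want to keep pointwise-then-integrate, weight the interpolation by the Schr\"odinger semigroup instead. The map $s\mapsto e^{s(\Delta+2V)}\bigl(|\nabla P_{t-s}u_0|^2/P_{t-s}u_0\bigr)$ is nondecreasing by exactly the Bochner inequality you wrote. Integrating and using self-adjointness then gives $\mathcal I(t)\le\sup_{\mathbf M}\bigl(e^{t(\Delta+2V)}1\bigr)\,\mathcal I(0)$. Since $(\partial_t-\Delta-2V)(1/J)=(5\delta^{-1}-2)|\nabla J|^2/J^3\ge0$ for $\delta\le5/2$, the maximum principle gives $e^{t(\Delta+2V)}1\le 1/J\le 1/\underline J(t)$. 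Note that this yields only the integrated bound, not your pointwise inequality with $P_t$ on the right.
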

 However, we have not been able to prove the monotonicity of Fisher information, namely the concavity of the entropy in this integral curvature setting.

The rest of the paper is organized as follows. In Section 2, we prove the second order log  Poincar\'e inequalities and Theorem \ref{th: concave1}. In Section 3, we derive the Hamilton type gradient estimates for positive solutions to \eqref{eq:heat eq} on compact manifolds with nonconvex boundaries i.e., Theorem \ref{Th: ham estimate}. Finally, in Section 4,  we establish a Wang gradient estimate for heat kernel convolutions under integral Ricci curvature bounds i.e., Theorem \ref{Integral GE}, which leads directly to the bound on the Fisher information, namely Theorem \ref{th:Fisher info}.
\section{\bf{Concavity of Entropy and second order log Poincar\'e Inequality}}

\subsection{Compact Case}
In this section, we first prove a new inequality which is called a {\it{second order log  Poincar\'e inequality}} on compact Riemannian manifolds, with or without boundary.  Part of the proof, i.e. inequalities \eqref{B bound} and \eqref{hessv}below already appeared in the papers \cite{CGK} and \cite{CFHS} Appendix A.

\begin{lemma}\label{Lem: poinca th1}
	Suppose $(\mathbf{M}^n,g)$ is a compact, connected Riemannian manifold, and let $u \in C^2(\mathbf{M})$ be a positive function.
	\begin{itemize}
		\item[(a)] Suppose $\mathbf{M}$ is closed i.e., $\partial \mathbf{M} = \emptyset$. Then
			\begin{equation}\label{ln poin1}
			\int_\mathbf{M} |\nabla \ln u|^2\,u\;d\mathrm{vol}_g \;\le\; \frac{n\bigl(n+4\sqrt{n}+8\bigr)}{2\,\lambda_1(\mathbf{M}, g)}\int_\mathbf{M} |\nabla^2 \ln u|^2\,u\;d\mathrm{vol}_g.
		\end{equation}
		\item[(b)] More generally, suppose $\mathbf{M}$ has  $C^2$ boundary $\partial \mathbf{M}$. If $u \in C^2(\mathbf{M})$ satisfies the Neumann boundary condition $\partial_\nu u = 0$ on $\partial \mathbf{M}$, then
		\begin{equation}\label{ineq:neu poin}
		\int_\mathbf{M}|\nabla\ln u|^2 \, u \, d\mathrm{vol}_g
		\le
		\frac{n\bigl(n+4\sqrt n+8\bigr)}
		{2\lambda_1^N(\mathbf{M},g)}
		\int_\mathbf{M}|\nabla^2\ln u|^2 \, u \, d\mathrm{vol}_g.
	\end{equation}
	\end{itemize}
\end{lemma}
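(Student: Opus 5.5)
The plan is to trade the first-order quantity $\mathcal I=\int_\mathbf M u|\nabla f|^2$, where $f=\ln u$, for second-order information in two ways. One is an integration by parts that controls the quartic quantity $\int u|\nabla f|^4$ by $\int u|\nabla^2 f|^2$. The other is the Neumann Poincar\'e inequality \eqref{lambda1N}, which converts an $L^2$ norm into a gradient norm at the cost of the factor $\lambda_1^{-1}$. The shape of the constant, $\tfrac{n}{2}\bigl((\sqrt n+2)^2+4\bigr)=\tfrac{n(n+4\sqrt n+8)}{2}$, suggests exactly this combination: a factor $(\sqrt n+2)^2$ from the quartic bound, a $4$ from the pure Hessian term, and $n/2$ from tracing and Cauchy--Schwarz. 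Parts (a) and (b) are handled simultaneously. In case (b), $\partial_\nu u=0$ gives $\partial_\nu f=0$, so every boundary term below vanishes, and $\lambda_1$ is replaced by $\lambda_1^N$.

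\emph{Step 1 (quartic bound, the inequality \eqref{B bound} of \cite{CGK,CFHS}).} I would write $\int u|\nabla f|^4=\int \langle\nabla u,\nabla f\rangle|\nabla f|^2$ and integrate by parts, which has no boundary term since $\partial_\nu f=0. This gives
\[
\int u|\nabla f|^4=-\int u\bigl(\Delta f\,|\nabla f|^2+2\nabla^2 f(\nabla f,\nabla f)\bigr).
\]
Using $|\Delta f|\le\sqrt n|\nabla^2 f|$ and Cauchy--Schwarz then yields
\[
\int u|\nabla f|^4\le(\sqrt n+2)^2\int u|\nabla^2 f|^2 .
\]

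\emph{Step 2 (Poincar\'e applied to a suitable function).} I would set $v=\sqrt u$, so that $\mathcal I=4\int|\nabla v|^2$ and $\nabla^2 v=v\bigl(\tfrac12\nabla^2 f+\tfrac14\nabla f\otimes\nabla f\bigr)$. Then $\int|\nabla^2 v|^2\le \tfrac12\int u|\nabla^2 f|^2+\tfrac18\int u|\nabla f|^4$, and by Step 1 this is a constant multiple of $\int u|\nabla^2 f|^2$. To bring in $\lambda_1$ I would apply \eqref{lambda1N} to each partial derivative of $v$ in a local frame, or, globally, to $\psi=|\nabla v|$ via Kato's inequality $|\nabla|\nabla v||\le|\nabla^2 v|$. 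This gives
\[
\int|\nabla v|^2\le \lambda_1^{-1}\int|\nabla^2 v|^2+\mathrm{Vol}(\mathbf M)\,\bar\psi^2 .
\]
A cleaner route that avoids the mean term is the identity $\mathcal I=-\int u\,\Delta f=-\int (u-\bar u)\Delta f$, which uses $\int\Delta f=0$ by the Neumann condition. Together with $\int u\,|\Delta f|^2\le n\int u|\nabla^2 f|^2$, a weighted Cauchy--Schwarz, and Poincar\'e applied to $v$ rather than to $u$, this should produce the factor $n/2$.

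\emph{Main obstacle.} The hardest point is disposing of the mean-value term: the Poincar\'e inequality only controls oscillation, while $\mathcal I$ is not an oscillation a priori. I would first try to write $\mathcal I$ as a pairing with a mean-zero quantity, such as $\Delta f$, which integrates to zero, or $\nabla v$ paired against a divergence. Poincar\'e would then be applied to the partner of that mean-zero quantity, and Step 1 used to absorb the $|\nabla f|^4$ contribution coming from $\nabla^2 v$. Finally I would collect constants and check that they assemble to $\tfrac{n(n+4\sqrt n+8)}{2\lambda_1}$.
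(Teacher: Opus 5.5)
\textbf{There is a genuine gap: the step that brings in $\lambda_1$ is never carried out.} Your Step 1 matches the paper. So does the reduction to $v=\sqrt u$, including $\int|\nabla^2 v|^2\le\frac12\int u|\nabla^2 f|^2+\frac18\int u|\nabla f|^4$. Neither of the two routes you write down for the $\lambda_1$ step closes.
\begin{itemize}
\item Applying the Poincar\'e inequality to $\psi=|\nabla v|$ leaves the term $\mathrm{Vol}(\mathbf M)\,\bar\psi^2$. This can be comparable to $\int|\nabla v|^2$ itself, so it cannot be absorbed. The componentwise version has the same mean-value problem.
\item The route through $\mathcal I=-\int(u-\bar u)\Delta f$ also fails. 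The weighted Cauchy--Schwarz that produces $\int u(\Delta f)^2$ leaves the $\chi^2$-type factor $\int(u-\bar u)^2/u$, and this is not bounded by $C\lambda_1^{-1}\mathcal I$. For example, let $u=\epsilon$ on a fixed region and rise to order one across a collar of fixed width. Then $\int|\nabla\sqrt u|^2$ stays bounded, while $\int(u-\bar u)^2/u\gtrsim\epsilon^{-1}$.
\end{itemize}
Your \emph{Main obstacle} paragraph points the right way (pair with a divergence, then apply Poincar\'e to the partner), but it stops before stating the actual inequality.

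\textbf{The missing ingredient is a second-order Poincar\'e inequality for $v$ itself.} Since $\partial_\nu v=\partial_\nu u/(2\sqrt u)=0$ on $\partial\mathbf M$ (or $\partial\mathbf M=\emptyset$), integrating by parts gives
\[
\int_{\mathbf M}|\nabla v|^2=-\int_{\mathbf M}(v-\bar v)\,\Delta v\le\|v-\bar v\|_{L^2}\|\Delta v\|_{L^2}\le\lambda_1^{-1/2}\|\nabla v\|_{L^2}\|\Delta v\|_{L^2}.
\]
Dividing by $\|\nabla v\|_{L^2}$ (the case $\nabla v\equiv 0$ is trivial) gives $\int|\nabla v|^2\le\lambda_1^{-1}\int(\Delta v)^2\le n\lambda_1^{-1}\int|\nabla^2 v|^2$. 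In case (b), $\lambda_1$ is replaced by $\lambda_1^N$. So the factor $n$ enters at this step, through $(\Delta v)^2\le n|\nabla^2 v|^2$, not by tracing $\nabla^2 f$.

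Inserting your Step 1 then yields $\int|\nabla^2 v|^2\le\bigl(\tfrac12+\tfrac{(\sqrt n+2)^2}{8}\bigr)\int u|\nabla^2 f|^2=\tfrac{n+4\sqrt n+8}{8}\int u|\nabla^2 f|^2$. Using $\mathcal I=4\int|\nabla v|^2$ gives exactly the constant $\tfrac{n(n+4\sqrt n+8)}{2\lambda_1}$. This is the paper's argument; adding this one inequality makes your outline a complete proof.
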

\begin{proof}[Proof of Lemma \ref{Lem: poinca th1}]
	We divide the proof into two cases, corresponding to the assumptions (a) and (b).\\
	{\bf{Case (a):}}
	Set
	$$
	v = \sqrt{u}.
	$$
	Since \(u\in C^2(\mathbf M)\) is positive, both \(\ln u\) and \(v=\sqrt{u}\) belong to \(C^2(\mathbf M)\). Differentiation yields
	$$
	\nabla v = \frac12\sqrt{u}\,\nabla \ln u,$$
	which gives the pointwise identity
	\begin{equation}\label{identity}
		u\left| \nabla \ln u\right| ^2 = 4\left| \nabla v\right| ^2.
	\end{equation}

	Let $
	\bar{v}= \frac{1}{\operatorname{Vol}_g(\mathbf{M})}\int_{\mathbf{M}}v\,d\mathrm{vol}_g$ denote the spatial average of $v$ over $\mathbf{M}$. Since $\mathbf{M}$ is a closed manifold, integration by parts gives
	$$
	\int_\mathbf{M} \left| \nabla v\right| ^2 \, d\mathrm{vol}_g = \int_\mathbf{M} \left\langle \nabla v, \nabla(v-\overline v)\right\rangle d\mathrm{vol}_g = -\int_\mathbf{M} (v-\overline v)\Delta v \, d\mathrm{vol}_g.
	$$
	Applying Cauchy-Schwarz inequality and the classical Poincar\'e inequality to $v-\overline v$ on $(\mathbf{M}, g)$, we obtain
	$$
	\begin{aligned}
		\int_\mathbf{M} \left| \nabla v\right| ^2 \, d\mathrm{vol}_g &\le \left( \int_\mathbf{M} (v-\overline v)^2 \, d\mathrm{vol}_g \right)^{1/2} \left( \int_\mathbf{M} (\Delta v)^2 \, d\mathrm{vol}_g \right)^{1/2} \\ &\le \frac{1}{\sqrt{\lambda_1(\mathbf{M}, g)}} \left( \int_\mathbf{M} \left| \nabla v\right| ^2 \, d\mathrm{vol}_g \right)^{1/2} \left( \int_\mathbf{M} (\Delta v)^2 \, d\mathrm{vol}_g \right)^{1/2}.
	\end{aligned}
	$$
	
	Since $\mathbf{M}$ is connected, if $\nabla v \equiv 0$, then $v$ is constant, and therefore $u = v^2$ is constant as well. Consequently, both sides of the asserted inequality vanish, and the result follows trivially. Otherwise, dividing both sides by $\left( \int_\mathbf{M} \left| \nabla v\right| ^2 d\mathrm{vol}_g \right)^{1/2}$ and squaring yields
	\[
	\int_{\mathbf{M}}|\nabla v|^2 \,d\mathrm{vol}_g
	\le
	\lambda_1(\mathbf{M},g)^{-1} \int_{\mathbf{M}}(\Delta v)^2\,d\mathrm{vol}_g.
	\]
	Moreover, recalling the pointwise trace inequality $(\Delta v)^2 = \left| \operatorname{tr}_g \nabla^2 v\right| ^2 \le n \left| \nabla^2 v\right| ^2$, where $n = \dim \mathbf{M}$, we deduce that
	\begin{equation}\label{poin ineq}
		\int_{\mathbf{M}}|\nabla v|^2 \,d\mathrm{vol}_g
		\le \frac{n}{ \lambda_1(\mathbf{M},g)}\int_{\mathbf{M}}|\nabla^2 v|^2 \,d\mathrm{vol}_g.
	\end{equation}

	Next, we establish an upper bound for the Hessian of $v$. Writing $v = e^\frac{\ln u}{2}$, a direct calculation yields
	\[
	\nabla^2 v = v\left(\frac12\nabla^2 \ln u + \frac14 d\ln u\otimes d\ln u\right) .
	\]
	Using the elementary inequality $\left| S+T\right| ^2 \le 2\left| S\right| ^2 + 2\left| T\right| ^2$ for the symmetric $2$-tensors $$
	S=\frac12\nabla^2\ln u,\qquad T=\frac14 d\ln u\otimes d\ln u,$$
	together with the identity $\left| d\ln u \otimes d\ln u\right| ^2 = \left| \nabla \ln u\right| ^4$, we find that
	$$
	\left| \nabla^2 v\right| ^2 \le \frac12 u \left| \nabla^2 \ln u\right| ^2 + \frac18 u \left| \nabla \ln u\right| ^4.
	$$
	Integrating this pointwise estimate over $\mathbf{M}$ yields
	\begin{equation}\label{hessian vf}
		\int_\mathbf{M} \left| \nabla^2 v\right| ^2 \, d\mathrm{vol}_g \le \frac{1}{2} \int_\mathbf{M}u \left| \nabla^2 \ln u\right| ^2\, d\mathrm{vol}_g + \frac18 \underbrace{	\int_\mathbf{M}u \left| \nabla \ln u\right| ^4\, d\mathrm{vol}_g}_{Q},
	\end{equation}
	where we define the integral quantity
	$$
	Q:= \int_\mathbf{M} u \left| \nabla \ln u\right| ^4 \, d\mathrm{vol}_g.
	$$
	
	To control $Q$, since $\mathbf{M}$ is a closed manifold, the divergence theorem implies
	$$
	\begin{aligned}
	Q
		=\int_\mathbf{M}
		\dfrac{|\nabla u|^4}{u^3}\,d\mathrm{vol}_g
		&=\int_\mathbf{M} \langle \nabla u,
		\frac{\nabla u}{u}\rangle  \frac{|\nabla u|^2}{u^2}\,d\mathrm{vol}_g\\
		&=-\int_\mathbf{M} u\,(\Delta \ln u)\,\frac{|\nabla u|^2}{u^2}\,d\mathrm{vol}_g- \int_\mathbf{M}  u\left\langle \nabla \ln u , \nabla(|\nabla \ln u|^2)\right\rangle\,d\mathrm{vol}_g.
	\end{aligned}
	$$
	Applying the Cauchy-Schwarz inequality to the integral on the right-hand side yields
	$$
	\begin{aligned}
	Q \le &\left( \int_\mathbf{M} u\,(\Delta \ln u)^2\,d\mathrm{vol}_g\right)^\frac{1}{2}\left(\int_\mathbf{M}
		\dfrac{|\nabla u|^4}{u^3}\,d\mathrm{vol}_g \right)^\frac{1}{2}\\
		&+2\left( \int_\mathbf{M} (\nabla|\nabla \ln u|)^2 u\,d\mathrm{vol}_g\right)^\frac{1}{2}\left( \int_\mathbf{M}
		|\nabla \ln u|^4 u\,d\mathrm{vol}_g\right) ^\frac{1}{2}.
	\end{aligned}
	$$
	By using the pointwise bounds $(\Delta \ln u)^2 \le  n\,\left|  \nabla^2 \ln u\right| ^2$ and $|\nabla\left| \nabla \ln u|\right|^2\le | \nabla^2\ln u|^2$, it follows that
	\begin{equation}
	Q \le \sqrt{n}\left( \int_\mathbf{M} |\nabla^2 \ln u|^2 u\,d\mathrm{vol}_g\right)^\frac{1}{2}
	Q^\frac{1}{2}+2 \left( \int_\mathbf{M} |\nabla^2 \ln u|^2 u\,d\mathrm{vol}_g\right)^\frac{1}{2}Q^\frac{1}{2}.
	\end{equation}
	Since $Q > 0$ (the case $Q= 0$ being immediate), dividing by $Q^\frac{1}{2}$ and squaring both sides leads to
	\begin{equation}\label{B bound}
	Q= \int_\mathbf{M} u \left| \nabla \ln u\right| ^4 \, d\mathrm{vol}_g \le (\sqrt{n}+2)^2 \int_\mathbf{M} u \left| \nabla^2 \ln u\right| ^2 \, d\mathrm{vol}_g.	
	\end{equation}
	Substituting estimate \eqref{B bound} back into \eqref{hessian vf}, we obtain
\begin{equation}\label{hessv}
	\int_\mathbf{M} \left| \nabla^2 v\right| ^2 \, d\mathrm{vol}_g \le \frac{n+4\sqrt n+8}{8} \int_\mathbf{M} u \left| \nabla^2 \ln u\right| ^2 \, d\mathrm{vol}_g.
\end{equation}
	Finally, combining this with \eqref{identity} and \eqref{poin ineq}, we conclude that
	$$
	\begin{aligned}
		\int_\mathbf{M} u \left| \nabla \ln u\right| ^2 \, d\mathrm{vol}_g &= 4 \int_\mathbf{M} \left| \nabla v\right| ^2 \, d\mathrm{vol}_g \\ &\le \frac{4n}{\lambda_1(\mathbf{M}, g)} \int_\mathbf{M} \left| \nabla^2 v\right| ^2 \, d\mathrm{vol}_g \\ &\le \frac{n\bigl(n+4\sqrt n+8\bigr)}{2\lambda_1(\mathbf{M}, g)} \int_\mathbf{M} u \left| \nabla^2 \ln u\right| ^2 \, d\mathrm{vol}_g.
	\end{aligned}
	$$
		{\bf{Case (b):}}
	The proof proceeds in an analogous manner to case (a), with the same notation.
	Indeed, setting \(f=\ln u\) and \(v=\sqrt u\), the Neumann condition yields
	\[
	\partial_\nu f=\frac{\partial_\nu u}{u}=0,
	\qquad
	\partial_\nu v=\frac{\partial_\nu u}{2\sqrt u}=0
	\quad\text{on } \partial \mathbf{M}.
	\]
	Applying the divergence theorem to \((v-\overline v)\nabla v\), we obtain
	\[
	\int_\mathbf{M} |\nabla v|^2 \, d\mathrm{vol}_g
	=
	-\int_\mathbf{M} (v-\overline v)\Delta v \, d\mathrm{vol}_g
	+
	\int_{\partial \mathbf{M}} (v-\overline v)\partial_\nu v \, dA_g,
	\]
	and the boundary integral vanishes since \(\partial_\nu v=0\) on \(\partial \mathbf{M}\). Similarly, the divergence theorem to the vector field \(X=u|\nabla f|^2\nabla f\) gives
	\[
	\int_{\partial \mathbf{M}} u|\nabla f|^2\partial_\nu f \, dA_g
	=
	\int_{\partial \mathbf{M}}
	\left\langle u|\nabla f|^2\nabla f,\nu \right\rangle dA_g
	=
	0,
	\]
	because \(\partial_\nu f=0\) on \(\partial \mathbf{M}\). Hence both boundary terms arising in the preceding proof disappear, and the same argument applies with \(\lambda_1(\mathbf{M}, g)\) replaced by \(\lambda_1^N(\mathbf{M}, g)\).
	The proof is complete.
\end{proof}

\begin{remark}
	In particular, the theorem can apply to any smooth positive solution \(u=u(x,t)\) of the heat equation
	\[
	\partial_tu=\Delta_gu
	\]
	for all \(t>0\). The constant depends only on the Riemannian manifold and is independent of both the solution \(u\) and the time \(t\).
\end{remark}

Based on Lemma \ref{Lem: poinca th1} , we prove Theorem \ref{th: concave1} and further extend it to manifolds with nonconvex boundary, showing that the entropy remains concave in time.

\begin{proof}[Proof of Theorem \ref{th: concave1}]
	We divide the proof into two cases, corresponding to the assumptions (a) and (b).\\
	{\bf{Case (a):}}
	By the definition \eqref{def:Ric}, we have
	$$
	\operatorname{Ric}(\nabla \ln u, \nabla \ln u) \geq - \|\operatorname{Ric}^-\|_{L^\infty} |\nabla \ln u|^2.
	$$
	In the case $\partial \mathbf{M} = \emptyset$, substituting this lower bound into the second derivative formula of $E(t)$ (i.e.,\eqref{eq:E''t}) yields
	$$
	E''(t) \leq -2 \int_{\mathbf{M}} |\nabla^2 \ln u|^2 u \, d\mathrm{vol}_g + 2 \int_{\mathbf{M}} \|\operatorname{Ric}^-\|_{L^\infty} |\nabla \ln u|^2 u \, d\mathrm{vol}_g.
	$$
	Applying our hypothesis $\|\operatorname{Ric}^{-}\|_{L^{\infty}} \leq \frac{2\,\lambda_1(\mathbf{M}, g)}{n\bigl(n+4\sqrt{n}+8\bigr)}$, we have
	\begin{equation}\label{E bound}
	E''(t) \leq -2 \int_{\mathbf{M}} |\nabla^2 \ln u|^2 u \, d\mathrm{vol}_g + \frac{4\,\lambda_1(\mathbf{M}, g)}{n\bigl(n+4\sqrt{n}+8\bigr)} \int_{\mathbf{M}} |\nabla \ln u|^2 u \, d\mathrm{vol}_g.
		\end{equation}
	
	We now apply the result of Lemma \ref{Lem: poinca th1}. For all $t> 0$, the gradient term is controlled by the Hessian term
	$$
	\int_{\mathbf{M}} |\nabla \ln u|^2 u \, d\mathrm{vol}_g \leq \frac{n\bigl(n+4\sqrt{n}+8\bigr)}{2\,\lambda_1(\mathbf{M}, g)} \int_{\mathbf{M}} |\nabla^2 \ln u|^2 u \, d\mathrm{vol}_g.
	$$
	Substituting this inequality into \eqref{E bound}, we obtain that
	$$
	E''(t) \leq -2 \int_{\mathbf{M}} |\nabla^2 \ln u|^2 u \, d\mathrm{vol}_g + \frac{4\,\lambda_1(\mathbf{M}, g)}{n\bigl(n+4\sqrt{n}+8\bigr)}\left( \frac{n\bigl(n+4\sqrt{n}+8\bigr)}{2\,\lambda_1(\mathbf{M},g)} \int_{\mathbf{M}} |\nabla^2 \ln u|^2 u \, d\mathrm{vol}_g \right).
	$$
	Hence
	$$E''(t) \leq 0.$$
	{\bf{Case (b):}}The proof for case (b) proceeds analogously, with the same notation, under the
	assumption that the second fundamental form satisfies
	$
	\mathrm{II} \ge -\sigma g_{\partial M}, \, \sigma \ge 0.
	$
	Then
	$$
	-2\mathrm{II}\left(\nabla^\partial f, \nabla^\partial f\right) \le 2\sigma\vert{}\nabla^\partial f\vert{}^2.
	$$
	Substituting this back into the $E''(t)$ (i.e.,\eqref{eq:E''t}) yields
	\begin{equation}\label{bdEbd}
			\frac{1}{2}E''(t) \le -\int_\mathbf{M} u\left[ \vert{}\nabla^2\ln u\vert{}^2 + \operatorname{Ric}(\nabla\ln u, \nabla\ln u) \right]\,d\mathrm{vol}_g+ \sigma\int_{\partial \mathbf{M}} u\vert{}\nabla^\partial\ln u\vert{}^2 \,dA_g.
	\end{equation}
	To absorb the boundary integral term, we apply the trace inequality \eqref{trace ineq} to $\left| \nabla v\right| $, where $v=\sqrt{u}$ and $u\in C^2(\mathbf{M})$. This gives
	$$
	\frac{1}{4}\int_{\partial \mathbf{M}} \dfrac{|\nabla u|^2}{u}\,dA_g \;\le\; \frac1{\lambda_{tr}}\int_\mathbf{M}\Bigl(\frac{1}{4}\,\dfrac{|\nabla u|^2}{u} + |\nabla^2 v|^2\Bigr)\,d\mathrm{vol}_g.
	$$

	Using the inequality \eqref{hessv}
	$$
	\int_\mathbf{M} \vert{}\nabla^2 v\vert{}^2 \, d\mathrm{vol}_g \le \frac{n+4\sqrt n+8}{8} \int_\mathbf{M} u \vert{}\nabla^2 \ln u\vert{}^2 \, d\mathrm{vol}_g.
	$$
	We obtain that
	$$
	\sigma\int_{\partial \mathbf{M}} u\vert{}\nabla^\partial\ln u\vert{}^2 \,dA_g\leq\frac{(n+4\sqrt n+8)\sigma}{2\lambda_{tr}}\int_\mathbf{M} u \vert{}\nabla^2\ln u\vert{}^2 \,d\mathrm{vol}_g+ 	\frac{\sigma}{\lambda_{tr}}\int_{\mathbf{M}} \dfrac{|\nabla u|^2}{u}\,d\mathrm{vol}_g.
	$$
	Moreover, by the definition of $\operatorname{Ric}^{-}$, we have
	$$
	\operatorname{Ric}(\nabla \ln u, \nabla \ln u) \geq - \|\operatorname{Ric}^-\|_{L^\infty} |\nabla \ln u|^2.
	$$
	Substituting this curvature lower bound and the boundary term estimate into \eqref{bdEbd}  leads to
	$$
	\frac{1}{2}E''(t) \leq\left( \frac{(n+4\sqrt n+8)\sigma}{2\lambda_{tr}} -1\right)  \int_{\mathbf{M}} |\nabla^2 \ln u|^2 u \, d\mathrm{vol}_g + \left( \frac{\sigma}{\lambda_{tr}}+\|\operatorname{Ric}^-\|_{L^\infty}\right)  \int_{\mathbf{M}} |\nabla \ln u|^2 u \, d\mathrm{vol}_g.
	$$
	
	Next, we recall result \eqref{ineq:neu poin} from Lemma~\ref{Lem: poinca th1}, i.e.,
	$$
	\int_\mathbf{M}|\nabla\ln u|^2 \, u \, d\mathrm{vol}_g
	\le
	\frac{n\bigl(n+4\sqrt n+8\bigr)}
	{2\lambda_1^N(\mathbf{M}, g)}
	\int_\mathbf{M}|\nabla^2\ln u|^2 \, u \, d\mathrm{vol}_g.
	$$
	Combining this with the previous inequality yields
	\begin{equation}\label{con bd ineq }
		\frac{1}{2}E''(t) \leq\small{\left( \frac{(n+4\sqrt n+8)\sigma}{2\lambda_{tr}} -1+(\frac{\sigma}{\lambda_{tr}}+\|\operatorname{Ric}^-\|_{L^\infty})\frac{n\bigl(n+4\sqrt n+8\bigr)}
			{2\lambda_1^N(\mathbf{M},g)}\right)} \int_{\mathbf{M}} |\nabla^2 \ln u|^2 u \, d\mathrm{vol}_g.
	\end{equation}

	Under the assumption that
	$$
\|\operatorname{Ric}^-\|_{L^\infty} \leq \frac{2\lambda_1^N(\mathbf{M}, g)}{n\bigl(n+4\sqrt n+8\bigr)}
	-\frac{\sigma\lambda_1^N(\mathbf{M}, g)}{n\lambda_{tr}}-\frac{\sigma}{\lambda_{tr}},
	$$
	the coefficient on the right-hand side of \eqref{con bd ineq } is nonpositive, ensuring that
	$$
	E''(t) \leq 0.
	$$
	
	The proof is complete.
\end{proof}
\subsection{Noncompact Case}
We can't prove that the second order logarithmic Poincar\'e inequality holds on noncompact manifolds for all positive solutions of the heat equation.  However, as given in next lemma, it holds for the heat kernel $u= G(x, t, y)$ on  some noncompact manifolds in short time.
\begin{lemma}\label{Wpoinca th}
	Let $(\mathbf{M}^n, g)$ be a complete noncompact Riemannian manifold satisfying
	\begin{itemize}
		\item[(a)] $\left|\nabla^j \operatorname{Rm}\right| \leq K,\quad j=0,1,2,3,$
		\item[(b)] the volume noncollapsing condition
		$\inf\limits_{y\in \mathbf{M}} |B(y, 1)| \geq \kappa> 0.$
	\end{itemize}
	Let $u=G(x, t, y)$ be the heat kernel with pole at $y$. Then for any given constant $\delta > 0$, there exists a constant
	$$
	t_*=t_*\left(n, K, \kappa, \delta\right)\in(0,1]
	$$
	such that
	\begin{equation}\label{Wpoinca ineq}
		\int_\mathbf{M}|\nabla \ln u|^2 u\,d\mathrm{vol}_g \leq (2+ \delta) t \int_\mathbf{M}\left|\nabla^2 \ln u\right|^2 u \,d\mathrm{vol}_g,\qquad \forall t \in(0, t_*].
	\end{equation}
\end{lemma}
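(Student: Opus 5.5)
The plan is to reduce the second order inequality \eqref{Wpoinca ineq} to a \emph{lower} bound on the Fisher information of the heat kernel. This is guided by the Euclidean model. For $u=(4\pi t)^{-n/2}e^{-|x-y|^2/4t}$ we have $\nabla^2\ln u=-\frac{1}{2t}g$, $\int u|\nabla\ln u|^2=\frac{n}{2t}$ and $\int u|\nabla^2\ln u|^2=\frac{n}{4t^2}$, so \eqref{Wpoinca ineq} holds with equality for the constant $2$. The $\delta$ is therefore room for curvature errors that vanish as $t\to0$.

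\textbf{Step 1 (algebraic reduction).} Write $f=\ln u$ and $\mathcal I(t)=\int_{\mathbf M}u|\nabla f|^2\,d\mathrm{vol}_g$. Under (a), $\mathbf M$ is stochastically complete, so $\int_{\mathbf M}u=1$. We also have $\int u\,\Delta f=\int\Delta u-\int|\nabla u|^2/u=-\mathcal I$; the integration by parts is justified by Gaussian bounds for $G$ and $\nabla G$ under bounded geometry. Using $|\nabla^2 f|^2\ge(\Delta f)^2/n$ and Cauchy--Schwarz with respect to the probability measure $u\,d\mathrm{vol}_g$ gives
\[
\int_{\mathbf M}u|\nabla^2 f|^2\,d\mathrm{vol}_g\ \ge\ \frac1n\Bigl(\int_{\mathbf M}u\,\Delta f\,d\mathrm{vol}_g\Bigr)^2=\frac{\mathcal I(t)^2}{n}.
\]
Hence \eqref{Wpoinca ineq} follows once we know $\mathcal I(t)\ge \frac{n}{(2+\delta)t}$ for $t\in(0,t_*]$, uniformly in the pole $y$.

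\textbf{Step 2 (lower bound on $\mathcal I$ by a second Cauchy--Schwarz).} Let $\rho=\frac12 d(\cdot,y)^2\,\phi$, where $\phi$ is a cutoff equal to $1$ on $B(y,r_0)$ and supported in $B(y,2r_0)$. Here $r_0$ is a fraction of the injectivity radius, which is bounded below uniformly by Cheeger--Gromov--Taylor using (a) and (b). On this ball, Hessian comparison with $|\operatorname{Rm}|\le K$ gives $|\nabla^2\tfrac12 d^2-g|\le CKd^2$, so $\Delta\rho=n+O(d^2)$ on $B(y,r_0)$. Then
\[
\int u\,\Delta\rho=-\int\langle\nabla u,\nabla\rho\rangle\le \mathcal I^{1/2}\Bigl(\int u|\nabla\rho|^2\Bigr)^{1/2},
\]
with $|\nabla\rho|^2\le d^2$ on the ball. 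The left side is at least $n-C\int u\,d^2-Ce^{-c/t}$, using Gaussian upper bounds to handle the region off $B(y,r_0)$. This yields
\[
\mathcal I(t)\ \ge\ \frac{\bigl(n-C\int u\,d^2-Ce^{-c/t}\bigr)^2}{\int u\,d^2+Ce^{-c/t}}.
\]

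\textbf{Step 3 (second moment).} It remains to show $\int u\,d(\cdot,y)^2\le 2nt\,(1+o(1))$ uniformly in $y$. Laplacian comparison with $\operatorname{Ric}\ge-(n-1)K$ gives, in the barrier sense globally, $\Delta\frac12 d^2\le n+C(d+d^2)$. Differentiating, $\frac{d}{dt}\int u\,\frac{d^2}{2}\le n+C\int u\,(d+d^2)$, which is justified via the cutoff and Gaussian tails. Integrating from $0$ (where $u\to\delta_y$) and using $\int u\,d\le(\int u\,d^2)^{1/2}$ gives $\int u\,d^2\le 2nt+C t^{3/2}$. Inserting this into Step 2 gives $\mathcal I(t)\ge\frac{n}{2t}(1-C\sqrt t)$, and choosing $t_*$ with $1-C\sqrt{t_*}\ge \frac{2}{2+\delta}$ finishes the proof.

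\textbf{Main obstacle.} The hard part is uniformity in $y$ and the analytic justification, not the algebra. We need Gaussian upper bounds for $G$ and $|\nabla G|$ with constants depending only on $(n,K,\kappa)$; these come from Li--Yau/Hamilton gradient estimates and the noncollapsing assumption. They are used to control cutoff errors and boundary terms at infinity, and to justify the time differentiation in Step 3 through the distributional part of $\Delta d^2$ at the cut locus. The higher derivative bounds $j\le3$ in (a) give regularity of $\ln G$ for the integrations by parts; if they were needed more seriously, I would instead fall back on a uniform parametrix expansion $G=(4\pi t)^{-n/2}e^{-d^2/4t}(a_0+O(t))$ inside the injectivity radius.
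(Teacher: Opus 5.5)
Your argument is correct, and it takes a genuinely different and more economical route than the paper.

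The paper argues by contradiction and blow-up:
\begin{itemize}
\item Along a putative counterexample sequence it rescales $\tilde g_i=g_i/t_i$.
\item It uses $|\nabla^j\operatorname{Rm}|\le K$ ($j\le 3$) and the injectivity radius bound to get $C^{2,\alpha}_{loc}$ Cheeger--Gromov convergence to $\mathbb R^n$.
\item It then invokes Hamilton's gradient estimate, the Han--Zhang Hessian upper bound, Hamilton's matrix lower bound and two-sided Gaussian bounds. These give tail control of $|\nabla\ln u|^2u$ and $|\nabla^2\ln u|^2u$ uniformly in $i$, so both integrals pass to the Gaussian limit, where $\frac n2\ge(2+\delta)\frac n4$ is absurd.
\end{itemize}

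You instead note that $\int u|\nabla^2\ln u|^2\ge \mathcal I^2/n$ holds for any probability density with enough decay, with no curvature input. This reduces the lemma to the first-order bound $t\,\mathcal I(t)\ge n/(2+\delta)$. You obtain that from a Cram\'er--Rao (Heisenberg) type inequality $\mathcal I\ge(\int u\Delta\rho)^2/\int u|\nabla\rho|^2$ plus the second-moment estimate $\int u\,d^2\le 2nt+Ct^{3/2}$. Two small points deserve an explicit line in a write-up:
\begin{itemize}
\item The $t^{3/2}$ term needs the a priori bound $\int u\,d^2=O(t)$, which your Gaussian bound supplies.
\item The annulus contribution to the denominator $\int u|\nabla\rho|^2$ must be $o(t)$. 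The Gaussian tail gives this; Chebyshev via the second moment alone would not.
\end{itemize}
Each inequality in your chain is an equality for the Euclidean heat kernel with $\rho=|x|^2/2$. This explains the constant $2$, just as the paper's limit computation does.

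What your route buys:
\begin{itemize}
\item An explicit $t_*\approx\bigl(\delta/(C(2+\delta))\bigr)^2$ with $C=C(n,K,\kappa)$.
\item Much lighter hypotheses. You use only $|\operatorname{Rm}|\le K$, an injectivity radius lower bound at the pole, stochastic completeness, and Gaussian-type bounds for $G$ and $\nabla G$.
\item You need none of the derivative bounds $j=1,2,3$, no Hessian estimates for $\ln G$ (in particular no noncompact versions of the Han--Zhang and Hamilton matrix estimates), and no compactness theory.
\end{itemize}

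The paper's route is a more flexible template (blow up to the flat model) that does not depend on spotting such an algebraic reduction. Its convergence step also identifies the limits of both rescaled integrals rather than only comparing them.
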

\begin{proof}[Proof of Lemma \ref{Wpoinca th}]
	By the method of contradiction, if not,  then there exists a fixed $\delta > 0$ such that for any arbitrarily small time $t_*$, the inequality must fail for some manifold satisfying the hypotheses at some time $t \in (0, t_*]$. In particular, we can find sequences of times $t_i \to 0$, and there exists a corresponding sequence of pointed complete Riemannian manifolds $(\mathbf{M}_i, g_i, y_i)$ such that for $u_i = G_i(\cdot, t_i, y_i)$, we have
	\begin{equation}\label{eq:contradiction-original}
	\int_{\mathbf{M}_i}\left|\nabla \ln u_i\right|^2 u_i\,d\mathrm{vol}_{g_i} > (2+\delta) t_i \int_{\mathbf{M}_i}\left|\nabla^2 \ln u_i\right|^2 u_i \,d\mathrm{vol}_{g_i}.
	\end{equation}
	
	Consider the scaled metrics $\tilde{g}_i=\frac{1}{t_i} g_i,  \tilde{t}=\frac{t}{t_i}$. Under this scaling, the geometric quantities transform as follows
	$$
	\left|\tilde{\nabla}^j \operatorname{Rm}(\tilde{g}_i)\right|_{\tilde{g}_i} = t_i^{1+j/2}\left|\nabla^j \operatorname{Rm}(g_i)\right|_{g_i} \leq t_i^{1+j/2} K \longrightarrow 0\quad \text{as}\,\, i \to \infty.
	$$
	By standard injectivity radius estimate \cite[Theorem 5.42]{Hamilton Ricci flow}, the uniform curvature bounds and the uniform volume noncollapsing condition $\inf |B(y, 1)| \ge \kappa$ guarantee a uniform lower bound on the injectivity radius 
	$$
\operatorname{inj}_{g_i}(y_i) \ge \iota_0 > 0,
	$$
	where $\iota_0$ depends on $n,K,\kappa$.
	
	Under the scaled metrics,
	$$
\operatorname{inj}_{\tilde{g}_i}(y_i) = \frac{1}{\sqrt{t_i}} \operatorname{inj}_{g_i}(y_i) \ge \frac{\iota_0}{\sqrt{t_i}} \longrightarrow \infty.
	$$
	By the Cheeger-Gromov Compactness Theorem, there exists a subsequence (still denoted by $i$) that converges in the 
	$C_{loc}^{2, \alpha}$ topology to the standard flat Euclidean space
	$$
	\left(\mathbf{M}_i, \tilde{g}_i, y_i\right) \xrightarrow{C_{loc}^{2, \alpha}} \left(\mathbf{M}_{\infty}, g_{\infty}, y_{\infty}\right) = \left(\mathbb{R}^n, g_\mathbb{\small E}, 0\right).
	$$
	
	We define the scaled heat kernel $\tilde{u}_i(x, \tilde{t}) = t_i^{n/2} u_i(x, t_i \tilde{t})$, which is the fundamental solution to the heat equation on $(\mathbf{M}_i, \tilde{g}_i)$. Note that $\ln \tilde{u}_i = \ln u_i + \frac{n}{2}\ln t_i$, so the spatial derivatives are identical. Under the scaled metrics $\tilde{g}_i$,
	
	$$
	|\tilde{\nabla} \ln \tilde{u}_i|^2_{\tilde{g}_i} = t_i |\nabla \ln u_i|^2_{g_i}, \quad |\tilde{\nabla}^2 \ln \tilde{u}_i|^2_{\tilde{g}_i} = t_i^2 |\nabla^2 \ln u_i|^2_{g_i}, \quad d\mathrm{vol}_{\tilde{g}_i} = t_i^{-n/2} d\mathrm{vol}_{g_i}.
	$$
	Integrating these over the manifolds yields
	$$
	\int_{\mathbf{M}_i} |\tilde{\nabla} \ln \tilde{u}_i|^2_{\tilde{g}_i} \tilde{u}_i \,d\mathrm{vol}_{\tilde{g}_i} = t_i \int_{\mathbf{M}_i} |\nabla \ln u_i|^2_{g_i} u_i \,d\mathrm{vol}_{g_i},
	$$
	
	$$
	\int_{\mathbf{M}_i} |\tilde{\nabla}^2 \ln \tilde{u}_i|^2_{\tilde{g}_i} \tilde{u}_i \,d\mathrm{vol}_{\tilde{g}_i} = t_i^2 \int_{\mathbf{M}_i} |\nabla^2 \ln u_i|^2_{g_i} u_i \,d\mathrm{vol}_{g_i}.
	$$
	By the scaling identities established above, the \eqref{eq:contradiction-original} becomes, at $\tilde{t}=1$,
	\begin{equation}\label{scaled ineq}
		\int_{\mathbf{M}_i} \left|\tilde{\nabla} \ln \tilde{u}_i\right|_{\tilde{g}_i}^2 \tilde{u}_i \,d\mathrm{vol}_{\tilde{g}_i} > (2+\delta) \int_{\mathbf{M}_i} \left|\tilde{\nabla}^2 \ln \tilde{u}_i\right|_{\tilde{g}_i}^2 \tilde{u}_i \,d\mathrm{vol}_{\tilde{g}_i}.
	\end{equation}
	
	Because both the domains and the measures vary with \(i\), the dominated convergence theorem cannot be applied directly to these integrals when passing the limit $i \to \infty$ inside the integrals of \eqref{scaled ineq}. We instead prove convergence on fixed compact subsets after pullback by the Cheeger-Gromov convergence maps and then establish a tail (the integrals outside of arbitrarily large balls) estimate that is uniform in \(i\).
	
	Hamilton's gradient estimate \cite[Theorem~1.1]{Hamilton} in the complete noncompact setting (see \cite{Kots}) gives, for \(t\in (\frac{t_i}{2}, t_i]\),
	$$
	\left|\nabla \ln u_i\right|^2 u_i=\frac{\left|\nabla u_i\right|^2}{u_i} \leq \tilde{c} \frac{u_i}{t-\frac{t_i}{2}} \ln \frac{A_i}{u_i},
	$$
	where $\tilde{c}= \tilde{c}(n, K, \kappa)$ and
	\begin{equation}\label{def: A_i}
		A_i = \sup\limits_{\mathbf{M}_i\times[\frac{t_i}{2}, t_i]} u_i(\cdot, t).
	\end{equation}

	We next derive a two-sided bound for the eigenvalues of
	\(\nabla^2\ln u_i\). For \(t\in (\frac{t_i}{2}, t_i]\), the global Hessian upper estimate of Han-Zhang \cite{Zhang and Han}
	implies
	\[
	\nabla^2\ln u_i
	\leq
	\frac{C}{t-\frac{t_i}{2}}
	\left(1+\ln\frac{A_i}{u_i}\right)g_i,
	\]
	where $C=C(n, K).$
	By the uniform version of Hamilton's matrix lower estimate \cite[Theorem 4.3]{Hamilton}, we have
	
	\[
	\nabla^2\ln u_i
	\geq
	-\frac{1}{2(t-\frac{t_i}{2})}g_i
	-
	C\left(
	1+\ln\frac{A_i}{u_i}
	\right)g_i,
	\]
	where $C$ depends only on $n$, $K$, and $\kappa$. Since the limiting manifold has bounded geometry, these matrix  estimates, which were originally proven for compact manifolds, still hold currently.
	
	We next establish uniform pointwise estimates which will be used to
	control the tails of the two integrals in \eqref{scaled ineq}.
	By Bishop-Gromov volume comparison theorem (e.g.  \cite[Theorem~1.1]{WeiVolumeComp}) and the noncollapsing assumption, we have
	\begin{equation}\label{eq:vol}
		c(n,K,\kappa)r^n\le|B_{g_i}(x,r)|\le C(n,K)r^n, \qquad
		x\in\mathbf M_i, \quad 0<r\le1.
	\end{equation}
Here and below, $c, C>0$ denote uniform constants, possibly varying
from line to line, which depend only on $n, K, \kappa$ and are
independent of $i$.

Combining \eqref{eq:vol} with the standard short-time two-sided
Gaussian heat kernel estimates
(see, e.g. \cite{Peter Li and Yau}),
we obtain
\begin{equation}\label{eq:Gaussian-original}
	ct^{-n/2}
	\exp\left(-C\frac{d_{g_i}(x,y_i)^2}{t}\right)
	\le
	u_i(x,t)
	\le
	Ct^{-n/2}
	\exp\left(-c\frac{d_{g_i}(x,y_i)^2}{t}\right),
	\quad
	0<t\le1.
\end{equation}
	The two-sided short-time heat kernel bounds imply
	\[
	\frac{c}{t_i^{n/2}}\leq A_i\leq \frac{C}{t_i^{n/2}},
	\]
	where $c$ and $C$ also depend on $n,K,\kappa$.
 Hence
\[
|\nabla^2\ln u_i|^2u_i
\leq
\frac{\Upsilon}{(t-\frac{t_i}{2})^2}
u_i\left(1+\ln\frac{A_i}{u_i}\right)^2,
\]
where $\Upsilon= \Upsilon(n, K, \kappa).$
When $t\in[\frac{3t_i}{4}, t_i]$, we have $\frac{t_i}{4} \leq t-\frac{t_i}{2}\leq \frac{t_i}{2}$, and hence the following pointwise estimates under the unscaled metrics
	$$
	\begin{aligned}
		& \left|\nabla \ln u_i\right|^2 u_i=\frac{\left|\nabla u_i\right|^2}{u_i} \leq \tilde{c} \frac{u_i}{t_i} \ln \frac{A_i}{u_i},
		\\
		&\left|\nabla^2 \ln u_i\right|^2 u_i \leq
		\dfrac{\Upsilon}{t_i^2} u_i (1+ \ln \frac{A_i}{u_i})^2,
	\end{aligned}
	$$
	where $A_i$ is given in \eqref{def: A_i}. Transforming these bounds into the scaled geometry at $\tilde{t} = 1$, we get
	$$
	\left|\tilde{\nabla} \ln \tilde{u}_i\right|_{\tilde{g}_i}^2 \tilde{u}_i \leq \tilde{c}  \tilde{u}_i \ln\frac{\tilde{A_i}}{\tilde{u}_i},
	$$
	$$
	\left|\tilde{\nabla}^2 \ln \tilde{u}_i\right|_{\tilde{g}_i}^2 \tilde{u}_i \leq \Upsilon \tilde{u}_i (1 + \ln\frac{\tilde{A_i}}{\tilde{u}_i})^2,
	$$
	where $\tilde{A_i} = t_i^{n/2} A_i \leq C(n,K,\kappa)$.
	
	Moreover, under the scaled metrics, the volume comparison theorem \eqref{eq:vol} implies that the unit balls have uniform volume bounds
	\[
	c
	\leq
	|B_{\widetilde g_i}(x,1)|
	\leq
	C.
	\]
	From the scaled Li-Yau estimate, the uniform two-sided Gaussian heat kernel estimates therefore give
	\begin{equation}\label{eq:scaled-gaussian}
		c\,e^{-Cr_i(x)^2}
		\leq
		\widetilde u_i(x,1)
		\leq
		C\,e^{-cr_i(x)^2},
		\qquad r_i(x)
		= d_{\widetilde g_i}(x,y_i).
	\end{equation}
	The lower bound implies
	$\ln(\tilde A_i/\tilde u_i)\le C(1+r_i^2)$.
	Combining with the upper
	bound, we obtain
	\begin{align*}
		|\tilde\nabla\ln\tilde u_i|^2\tilde u_i
		&\le C(1+r_i^2)e^{-cr_i^2},\\
		|\tilde\nabla^2\ln\tilde u_i|^2\tilde u_i
		&\le C(1+r_i^2)^2e^{-cr_i^2}.
	\end{align*}

	Define
	\[
	F_i
	= |\widetilde\nabla\ln\widetilde u_i(\cdot,1)|_{\widetilde g_i}^2
	\widetilde u_i(\cdot,1),
	\]
	and
	\[
	H_i
	= |\widetilde\nabla^2\ln\widetilde u_i(\cdot,1)|_{\widetilde g_i}^2
	\widetilde u_i(\cdot,1).
	\]
	The preceding estimates give
	\begin{equation}\label{eq:pointwise-domination}
		0\leq F_i(x)+H_i(x)
		\leq
		C\bigl(1+r_i(x)^4\bigr)e^{-cr_i(x)^2},
	\end{equation}

	We first prove convergence on compact subsets. Fix $R>0$.
	By pointed Cheeger-Gromov convergence theorem, for all sufficiently large $i$, there exist smooth embeddings
	\[
	\Phi_{i,R}
	:
	B_{\mathbb R^n}(0,3R)\longrightarrow \mathbf{M}_i,
	\qquad
	\Phi_{i,R}(0)=y_i,
	\]
	such that
	\[
	h_i:=\Phi_{i,R}^{*}\widetilde g_i
	\longrightarrow g_{\mathbb E}
	\]
	in $C^{2,\alpha}$ on $B_{\mathbb R^n}(0,3R)$. After enlarging the
	domain, we may assume that
	\[
	B_{\widetilde g_i}(y_i,2R)
	\subset
	\Phi_{i,R}\bigl(B_{\mathbb R^n}(0,3R)\bigr)
	\]
	for all sufficiently large $i$.
	Let
	\[
	q_i(x)
	= \widetilde u_i(\Phi_{i,R}(x),1).
	\]
	Then
	\[
	q_i\longrightarrow u_\infty(\cdot,1)
	\]
	in $C^2(B_{\mathbb R^n}(0,3R))$. Since
	$u_\infty(\cdot,1)\geq c>0$  on this fixed ball,
	we also have
	\[
	\ln q_i
	\longrightarrow
	\ln u_\infty(\cdot,1)
	\]
	in $C^2(B_{\mathbb R^n}(0,3R))$. Consequently,
	\[
	F_i\circ\Phi_{i,R}
	\longrightarrow
	F_\infty
	:=
	|\nabla\ln u_\infty(\cdot,1)|^2u_\infty(\cdot,1),
	\]
	and
	\[
	H_i\circ\Phi_{i,R}
	\longrightarrow
	H_\infty
	:=
	|\nabla^2\ln u_\infty(\cdot,1)|^2u_\infty(\cdot,1)
	\]
	uniformly on $B_{\mathbb R^n}(0,3R)$. Moreover,
	\[
	d\mathrm{vol}_{h_i}\longrightarrow d\mathrm{vol}_{g_{\mathbb E}}
	\]
	uniformly on the same ball.
	
	Choose a continuous function
	$\eta:[0,\infty)\to[0,1]$ satisfying
	\[
	\eta\equiv1\quad\text{on }[0,1],
	\qquad
	\eta\equiv0\quad\text{on }[2,\infty),
	\]
	and define
	\[
	\eta_{i,R}(x)
	= \eta\left(\frac{r_i(x)}{R}\right),
	\qquad
	\eta_{\infty,R}(x)
	= \eta\left(\frac{|x|}{R}\right).
	\]
	The convergence of the metrics implies that
	\[
	\eta_{i,R}\circ\Phi_{i,R}
	\longrightarrow
	\eta_{\infty,R}
	\]
	uniformly on the fixed coordinate ball. Hence
	\begin{equation}\label{eq:truncated-F}
		\int_{\mathbf{M}_i}\eta_{i,R}F_i\,d\mathrm{vol}_{\tilde{g}_i}=\int_{B_{\mathbb R^n}(0,3R)}
		(\eta_{i,R}F_i)\circ\Phi_{i,R}\,d\mathrm{vol}_{h_i}
		\longrightarrow
		\int_{\mathbb R^n}\eta_{\infty,R}F_\infty\,d\mathrm{vol}_{g_{\mathbb E}},
	\end{equation}
	and
	\begin{equation}\label{eq:truncated-H}
		\int_{\mathbf{M}_i}\eta_{i,R}H_i\,d\mathrm{vol}_{\tilde{g}_i}=\int_{B_{\mathbb R^n}(0,3R)}
		(\eta_{i,R}H_i)\circ\Phi_{i,R}\,d\mathrm{vol}_{h_i}
		\longrightarrow
		\int_{\mathbb R^n}\eta_{\infty,R}H_\infty\,d\mathrm{vol}_{g_{\mathbb E}}.
	\end{equation}

	It remains to show that the tails vanish uniformly in $i$. Since
	\[
	\operatorname{Ric}(g_i)\geq-\Lambda g_i
	\]
	and \(t_i\leq1\), then we have
	\[
	\operatorname{Ric}(\widetilde g_i)
	=
	\operatorname{Ric}(g_i)
	\geq
	-\Lambda t_i\widetilde g_i
	\geq
	-\Lambda \widetilde g_i,
	\]
	for some constant $\Lambda$ independent of $i$, Bishop-Gromov
	comparison theorem gives
	\begin{equation}\label{eq:scaled-volume-growth}
		\operatorname{Vol}_{\widetilde g_i}
		B_{\widetilde g_i}(y_i,\rho)
		\leq
		\tilde{C}(1+\rho)^ne^{\tilde{C}\rho},
		\qquad
		\rho\geq1.
	\end{equation}

	For each integer $m\geq 1$, let
	\[
	\mathcal A_{i,m}
	= B_{\widetilde g_i}(y_i,m+1)
	\setminus
	B_{\widetilde g_i}(y_i,m).
	\]
	On $\mathcal A_{i,m}$,
	\eqref{eq:pointwise-domination} and
	\eqref{eq:scaled-volume-growth} give
	\[
	\begin{aligned}
		\int_{\mathcal A_{i,m}}(F_i+H_i)\,
		d\mathrm{vol}_{\tilde{g}_i}
		&\leq
		C(1+m^4)e^{-cm^2}
		\operatorname{Vol}_{\widetilde g_i}
		B_{\widetilde g_i}(y_i,m+1)\\
		&\leq
		C(1+m^{n+4})e^{-cm^2+\tilde{C}m}.
	\end{aligned}
	\]
	For all sufficiently large $m$, we have
	\[
	-cm^2+\tilde{C}m
	\leq
	-\frac{c}{2}m^2.
	\]
	We obtain
	\[
	\int_{\mathcal A_{i,m}}(F_i+H_i)\,
	d\mathrm{vol}_{\tilde{g}_i}
	\leq
	C(1+m^{n+4})e^{-cm^2/2}.
	\]
	Summing these estimates over all
	$m$ gives
	\[
	\begin{aligned}
		\int_{\mathbf{M}_i\setminus B_{\widetilde g_i}(y_i,R)}
		(F_i+H_i)\,d\mathrm{vol}_{\tilde{g}_i}
		&\leq
		C\sum_{m=\lfloor R\rfloor}^{\infty}
		(1+m^{n+4})e^{-cm^2/2}.
	\end{aligned}
	\]
	The right-hand side is independent of $i$ and tends to zero as
	$R\to\infty$. Thus
	\begin{equation}\label{eq:uniform-tail}
		\lim_{R\to\infty}
		\sup_i
		\int_{\mathbf{M}_i\setminus B_{\widetilde g_i}(y_i,R)}
		(F_i+H_i)\,d\mathrm{vol}_{\tilde{g}_i}
		=0,
	\end{equation}
which is uniform in $i$.
	The Euclidean limit integrands $F_\infty$ and $H_\infty$ also have
	Gaussian decay, and hence
	\[
	\lim_{R\to\infty}
	\int_{\mathbb R^n\setminus B(0,R)}
	(F_\infty+H_\infty)\,d\mathrm{vol}_{g_{\mathbb E}}
	=0.
	\]
	
	Combining the convergence of the cut-off integrals with the uniform
	tail estimate gives the desired global convergence. Let $J_i$ denote either $F_i$ or $H_i$, and let $J_\infty$ denote the corresponding limit integrand.
	Fix $\varepsilon>0$. By the uniform tail estimate and the Gaussian decay of the limit integrand, there exists $R>0$ such that
	\[
	\sup_i
	\int_{\mathbf{M}_i\setminus B_{\widetilde g_i}(y_i,R)}
	J_i\,d\mathrm{vol}_{\tilde{g}_i}
	< \frac{\varepsilon}{3},
	\]
	and
	\[
	\int_{\mathbb R^n\setminus B(0,R)}
	J_\infty\,d\mathrm{vol}_{g_{\mathbb E}}
	< \frac{\varepsilon}{3}.
	\]
	For this fixed $R$, the convergence of the localized integrals implies that there exists $i_0=i_0(R,\varepsilon)$ such that, for every $i\geq i_0$,
	\[
	\left|
	\int_{\mathbf{M}_i}\eta_{i,R}J_i\,d\mathrm{vol}_{\tilde{g}_i}
	- \int_{\mathbb R^n}\eta_{\infty,R}J_\infty\,d\mathrm{vol}_{g_{\mathbb E}}
	\right|
	< \frac{\varepsilon}{3}.
	\]
	Since $0\leq\eta_{i,R}\leq1$ and  $\eta_{i,R}=1$ on $B_{\widetilde g_i}(y_i,R)$, we have
	\[
	\int_{\mathbf{M}_i}(1-\eta_{i,R})J_i\,d\mathrm{vol}_{\tilde{g}_i}
	\leq
	\int_{\mathbf{M}_i\setminus B_{\widetilde g_i}(y_i,R)}
	J_i\,d\mathrm{vol}_{\tilde{g}_i}.
	\]
	Similarly,
	\[
	\int_{\mathbb R^n}(1-\eta_{\infty,R})J_\infty\,d\mathrm{vol}_{g_{\mathbb E}}
	\leq
	\int_{\mathbb R^n\setminus B(0,R)}
	J_\infty\,d\mathrm{vol}_{g_{\mathbb E}}.
	\]
	Therefore, for every $i\geq i_0$,
	\[
	\begin{aligned}
		\left|
		\int_{\mathbf{M}_i}J_i\,d\mathrm{vol}_{\tilde{g}_i}
		- \int_{\mathbb R^n}J_\infty\,d\mathrm{vol}_{g_{\mathbb E}}
		\right|
		&\leq
		\left|
		\int_{\mathbf{M}_i}\eta_{i,R}J_i\,d\mathrm{vol}_{\tilde{g}_i}
		- \int_{\mathbb R^n}\eta_{\infty,R}J_\infty\,d\mathrm{vol}_{g_{\mathbb E}}
		\right|\\
		&\quad+
		\int_{\mathbf{M}_i}(1-\eta_{i,R})J_i\,d\mathrm{vol}_{\tilde{g}_i}+
		\int_{\mathbb R^n}(1-\eta_{\infty,R})J_\infty\,d\mathrm{vol}_{g_{\mathbb E}}\\
		&<
		\frac{\varepsilon}{3}
		+ \frac{\varepsilon}{3}
		+ \frac{\varepsilon}{3}
		= \varepsilon.
	\end{aligned}
	\]
	Since $\varepsilon>0$ is arbitrary, we have
	\[
	\lim_{i\to\infty}
	\int_{\mathbf{M}_i}F_i\,d\mathrm{vol}_{\tilde{g}_i}
	= \int_{\mathbb R^n}F_\infty\,d\mathrm{vol}_{g_{\mathbb E}},
	\]
	and
	\[
	\lim_{i\to\infty}
	\int_{\mathbf{M}_i}H_i\,d\mathrm{vol}_{\tilde{g}_i}
	= \int_{\mathbb R^n}H_\infty\,d\mathrm{vol}_{g_{\mathbb E}}.
	\]
	Passing the limit in \eqref{scaled ineq}, we obtain
	\begin{equation}\label{lim ineq}
		\int_{\mathbb{R}^n}\left|\nabla \ln u_{\infty}(\cdot, 1)\right|^2 u_{\infty}(\cdot, 1) \,d\mathrm{vol}_{g_{\mathbb E}} \geq (2+\delta) \int_{\mathbb{R}^n} \left| \nabla^2 \ln u_{\infty}(\cdot, 1)\right|^2 u_{\infty}(\cdot, 1) \,d\mathrm{vol}_{g_{\mathbb E}}.
	\end{equation}

	For the scaled time variable $\tilde{t} = \frac{t}{t_i} \in (0, 1]$, the scaled heat kernels $\tilde{u}_i(x, \tilde{t}) = G_{\tilde{g}_i}(\cdot, \tilde{t}, y_i)$ at $\tilde{t} = 1$ converge in $C_{loc}^2$ to the standard Gaussian heat kernel
	$$
	u_{\infty}(x, 1) = (4\pi)^{-n/2} e^{-|x|^2/4}
	$$
	on $\mathbb{R}^n$.
	We compute the exact values of both integrands as follows
	$$
	\ln u_{\infty}(x, 1) = -\frac{|x|^2}{4} - \ln (4 \pi)^{\frac{n}{2}},\quad
	\nabla \ln u_{\infty}(x, 1) = -\frac{x}{2}\quad\text{and}\quad
	\nabla^2 \ln u_{\infty}(x, 1) = -\frac{1}{2} I.
	$$
	Therefore,
	$$
	\left|\nabla \ln u_{\infty}\right|^2 = \frac{|x|^2}{4}, \quad
	\left|\nabla^2 \ln u_{\infty}\right|^2 = \sum_{j,k} \left(-\frac{1}{2}\delta_{jk}\right)^2 = \frac{n}{4}.
	$$
	
	Now, substituting these explicit forms back into \eqref{lim ineq}, we obtain

	$$\text{LHS} = \int_{\mathbb{R}^n} \frac{|x|^2}{4} \frac{1}{(4 \pi)^{\frac{n}{2}}} e^{-\frac{|x|^2}{4}} \,d\mathrm{vol}_{g_{\mathbb E}} = \frac{n}{2}
	$$
and
	$$
	\text{RHS} = (2+\delta) \int_{\mathbb{R}^n} \frac{n}{4} u_{\infty} \,d\mathrm{vol}_{g_{\mathbb E}}= (2+\delta)\frac{n}{4} = \frac{n}{2}+\frac{\delta n}{4}.
	$$
	The integral inequality then forces
	$$
	\frac{n}{2} \geq \frac{n}{2}+\frac{\delta n}{4},
	$$
	which implies
	$$
	0 \ge \frac{\delta n}{4}.
	$$
	Since $n \ge 1$ and we assumed $\delta > 0$, this is a contradiction.
\end{proof}
 With the above lemma, we can prove the concavity of entropy in the noncompact setting.
\begin{theorem}\label{noncpt E concave}
	Assume that $(\mathbf{M}^n, g)$ is a complete noncompact Riemannian manifold satisfying
		\begin{itemize}
		\item[(a)] $\left|\nabla^j \operatorname{Rm}\right| \leq K,\quad j=0,1,2,3,$
		\item[(b)] the volume noncollapsing condition
		$\inf\limits_{y\in \mathbf{M}} |B(y, 1)| \geq \kappa> 0.$
	\end{itemize}
	Let $u=G(x, t, y)$ be the heat kernel with pole at $y$. For any given $\delta > 0$, there exists a constant
$$
t_*=t_*\left(n, K, \kappa, \delta\right)\in(0,1]
$$
	such that
	\[
	\|\operatorname{Ric}^{-}\|_{L^\infty} \leq \frac{1}{(2+\delta) t_*}.
	\]
	Then the entropy
	\[
	E(t) := -\int_{\mathbf{M}} u(x,t) \ln u(x,t) \, d\mathrm{vol}_g
	\]
	satisfies $E''(t)\leq 0$ for all $t \in (0, t_*],$ i.e., the entropy is concave in time, implying the Fisher information is nonincreasing.
\end{theorem}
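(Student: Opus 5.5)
We combine the second-derivative formula \eqref{eq:E''t} (here $\partial\mathbf M=\emptyset$, so there is no boundary term) with the short-time second order log Poincar\'e inequality of Lemma~\ref{Wpoinca th}, in the same way that Lemma~\ref{Lem: poinca th1} was used for Theorem~\ref{th: concave1}(a).

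\textbf{Step 1: justify \eqref{eq:E''t}.} On a noncompact manifold the integration by parts leading to \eqref{eq:E''t} must be checked. For $t\in(0,t_*]$ we use the two-sided Gaussian bound \eqref{eq:Gaussian-original} together with the gradient and Hessian estimates quoted in the proof of Lemma~\ref{Wpoinca th}. These show that $u|\nabla\ln u|^2$, $u|\nabla^2\ln u|^2$ and the boundary fluxes over large spheres decay like $(1+r^4)e^{-cr^2/t}$. The volume growth is at most exponential by Bishop--Gromov. Cutting off on $B(y,R)$ and letting $R\to\infty$ then kills all boundary terms. This gives, for $t\in(0,t_*]$,
\[
E''(t)=-2\int_{\mathbf M}u\bigl[|\nabla^2\ln u|^2+\operatorname{Ric}(\nabla\ln u,\nabla\ln u)\bigr]\,d\mathrm{vol}_g .
\]

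\textbf{Step 2: insert the curvature bound.} Using $\operatorname{Ric}\ge-\|\operatorname{Ric}^-\|_{L^\infty}g$ we get
\[
E''(t)\le -2\int_{\mathbf M}u|\nabla^2\ln u|^2\,d\mathrm{vol}_g+2\|\operatorname{Ric}^-\|_{L^\infty}\int_{\mathbf M}u|\nabla\ln u|^2\,d\mathrm{vol}_g .
\]

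\textbf{Step 3: apply Lemma~\ref{Wpoinca th}.} Take the $t_*$ given by Lemma~\ref{Wpoinca th} for the same $\delta$. For $t\in(0,t_*]$, inequality \eqref{Wpoinca ineq} gives
\[
\int_{\mathbf M}u|\nabla\ln u|^2\,d\mathrm{vol}_g\le (2+\delta)\,t\int_{\mathbf M}u|\nabla^2\ln u|^2\,d\mathrm{vol}_g\le (2+\delta)\,t_*\int_{\mathbf M}u|\nabla^2\ln u|^2\,d\mathrm{vol}_g .
\]

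\textbf{Step 4: conclude.} Substituting Step 3 into Step 2 yields
\[
E''(t)\le 2\bigl(-1+\|\operatorname{Ric}^-\|_{L^\infty}(2+\delta)t_*\bigr)\int_{\mathbf M}u|\nabla^2\ln u|^2\,d\mathrm{vol}_g .
\]
The hypothesis $\|\operatorname{Ric}^-\|_{L^\infty}\le 1/((2+\delta)t_*)$ makes the bracket nonpositive, so $E''(t)\le0$ on $(0,t_*]$. Since $E'=\mathcal I$, this says the Fisher information is nonincreasing there.

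\textbf{Main obstacle.} The only delicate point is Step 1, the rigorous derivation of \eqref{eq:E''t} on a noncompact manifold. That requires differentiating under the integral and discarding the boundary terms at infinity. The planned approach is to reuse the pointwise domination \eqref{eq:pointwise-domination}, in unscaled form, together with the volume growth bound \eqref{eq:scaled-volume-growth}. Third derivatives of $\ln u$ appear in the Bochner computation. For these we plan to invoke the bounded-geometry assumption on $\nabla^j\operatorname{Rm}$ and standard local parabolic derivative estimates, which give polynomial-times-Gaussian bounds.
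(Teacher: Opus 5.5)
Your proposal is correct and follows the paper's proof. You insert $\operatorname{Ric}\ge-\|\operatorname{Ric}^-\|_{L^\infty}g$ into \eqref{eq:E''t}, apply Lemma~\ref{Wpoinca th}, and use $t\le t_*$ with the curvature hypothesis to make the coefficient of $\int_{\mathbf M}u|\nabla^2\ln u|^2\,d\mathrm{vol}_g$ nonpositive; the paper writes this coefficient as $-2(1-t/t_*)$. Your Step~1, which justifies \eqref{eq:E''t} on the noncompact manifold via Gaussian decay and cutoffs, is extra rigor that the paper takes for granted.
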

\begin{proof}[Proof of Theorem \ref{noncpt E concave}]
	By the definition \eqref{def:Ric}, we have
	$$
	\operatorname{Ric}(\nabla \ln u, \nabla \ln u) \geq - \|\operatorname{Ric}^-\|_{L^\infty} |\nabla \ln u|^2.
	$$
	Substituting this into the second derivative of $E(t)$, i.e., \eqref{eq:E''t}, yields
	$$
	E''(t) \leq -2 \int_{\mathbf{M}} |\nabla^2 \ln u|^2 u \, d\mathrm{vol}_g + 2 \int_{\mathbf{M}} \|\operatorname{Ric}^-\|_{L^\infty} |\nabla \ln u|^2 u \, d\mathrm{vol}_g.
	$$
	Applying our hypothesis $
	\|\operatorname{Ric}^{-}\|_{L^\infty} \leq \frac{1}{(2+\delta) t_*}
	$, we have
	\begin{equation}\label{Ehk bound}
	E''(t) \leq -2 \int_{\mathbf{M}} |\nabla^2 \ln u|^2 u \, d\mathrm{vol}_g + \frac{2}{(2+\delta) t_*} \int_{\mathbf{M}} |\nabla \ln u|^2 u \, d\mathrm{vol}_g.
	\end{equation}

	We now apply the result of Lemma \ref{Wpoinca th}.
	 For all $t \in (0, t_*]$, the gradient term is controlled by the Hessian term
	$$
	\int_{\mathbf{M}} |\nabla \ln u|^2 u \, d\mathrm{vol}_g \leq (2+\delta) t \int_{\mathbf{M}} |\nabla^2 \ln u|^2 u \, d\mathrm{vol}_g.
	$$
	Substituting this inequality into \eqref{Ehk bound}, we obtain that
	$$
	E''(t) \leq -2 \int_{\mathbf{M}} |\nabla^2 \ln u|^2 u \, d\mathrm{vol}_g + \frac{2}{(2+\delta) t_*} \left( (2+\delta) t \int_{\mathbf{M}} |\nabla^2 \ln u|^2 u \, d\mathrm{vol}_g \right).
	$$
	Hence
	$$
	E''(t) \leq -2 \left( 1 - \frac{t}{t_*} \right) \int_{\mathbf{M}} |\nabla^2 \ln u|^2 u \, d\mathrm{vol}_g.
	$$
	
	Since $t \in (0, t_*]$, we have $\frac{t}{t*} \leq 1$. Therefore, the coefficient $ 1 - \frac{t}{t_*} $ is nonnegative. Because the integral $\int_{\mathbf{M}} |\nabla^2 \ln u|^2 u \, d\mathrm{vol}_g$ is always nonnegative, we conclude
	$$
	E''(t) \leq 0.
	$$
\end{proof}
\section{\bf{Density of Entropy (Hamilton Type Estimates) on Manifolds with Nonconvex Boundaries}}
In this section, we first give some technical
preparations needed for the proof of Theorem~\ref{Th: ham estimate}. We recall the following Harnack inequality from \cite[Theorem~3.1]{Jiaping Wang}, rewritten in the notation of the present paper. More precisely, the constant \(H\) in Wang's theorem is replaced here by \(3L+1\).
	\begin{lemma}(\cite[Theorem~3.1]{Jiaping Wang})\label{lem:Harnack}
		Suppose that $u=u(x, t)$ is a positive solution  of the heat equation \eqref{eq:heat eq} under the assumptions of Theorem~\ref{Th: ham estimate}. Then, for any \(\lambda > (2 + 3L)^2 \) and \( 0 <\theta< \frac{1}{2} \), we have
	\[
	u(x_1, t_1)	\leq u(x_2, t_2 ) \left( \frac{t_2}{t_1} \right)^{C_1}\exp\left\{\frac{\lambda d^2(x_1,x_2)}{4 (t_2- t_1)}+C_2(t_2- t_1) \right\},
	\]
	for all \( x_1, x_2 \in \mathbf M \) and $0< t_1< t_2\leq 1$,
	where
	\[
	C_1 = \frac{n\lambda(\lambda - 1)^2(2 + 3L)^4}{(2 - \theta)(1 - \theta)\bigl(\lambda - (2 + 3L)^2\bigr)^2},
	\]
	\[
	C_2 = \frac{6n(\lambda - 1)(2 + 3L)^7 K}{\bigl(\lambda - (2 + 3L)^2\bigr)^2}
	+ \frac{309 n^2 \lambda^2 (3L+1) (\lambda - 1)(2 + 3L)^{10} }{\bigl(\lambda - (2 + 3L)^2\bigr)^4 R^2 \theta}.
	\]
\end{lemma}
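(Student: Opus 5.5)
The plan is to follow the Li–Yau strategy, as adapted by J. Wang to nonconvex boundaries. First I would prove a pointwise differential Harnack (Li–Yau type) inequality for $f=\ln u$ of the form
\[
|\nabla f|^2-\lambda f_t\le \frac{C_1'}{t}+C_2' \qquad\text{on } \mathbf M\times(0,1],
\]
and then integrate it along space-time paths to obtain the stated Harnack inequality. Here $C_1',C_2'$ are constants of the same structure as $C_1,C_2$.

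\textbf{Boundary difficulty and auxiliary function.} The Neumann condition gives $\partial_\nu f=0$ on $\partial\mathbf M$, and hence
\[
\partial_\nu|\nabla f|^2=-2\,\mathrm{II}(\nabla^\partial f,\nabla^\partial f)\le 2L|\nabla f|^2 .
\]
So $|\nabla f|^2$ may attain its maximum on the boundary with the wrong sign of the normal derivative, and the maximum principle fails. To repair this I would introduce an auxiliary weight $\psi=\psi(r)$, where $r$ is the distance to $\partial\mathbf M$. I would take $\psi\equiv 1$ at distance at least $R/2$ from $\partial\mathbf M$, $\psi=3L+1$ on $\partial\mathbf M$ (this is where the constant $H=3L+1$ enters), and $-\psi'(0)$ large compared with $L$. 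With this choice, $\psi|\nabla f|^2$ has a favourable sign of the normal derivative at boundary points.

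\textbf{Role of the rolling-ball condition.} The interior rolling $R$-ball condition of Definition~\ref{Def: inte roll R} guarantees that $r$ is smooth, or at least has a barrier-sense Hessian bound, in a collar of width comparable to $R$. In that collar I expect $|\nabla r|=1$ and $\nabla^2 r\ge -C(n,K)/R$. These bounds control $\Delta\psi$ and $|\nabla\psi|^2/\psi$ by $C(L)/R^2$, which is the source of the $R^{-2}\theta^{-1}$ term in $C_2$.

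\textbf{Maximum principle step.} Next I would set $F=t\bigl(\psi|\nabla f|^2-\lambda f_t\bigr)$, or Wang's variant with $\alpha=\psi$. I would compute $(\Delta-\partial_t)F$ using the Bochner formula, the bound $\operatorname{Ric}\ge -K$, and the inequality $|\nabla^2 f|^2\ge (\Delta f)^2/n$. At an interior maximum point, where $\nabla F=0$, I would absorb the gradient terms of $\psi$ with weight $\theta\in(0,\tfrac12)$ via Young's inequality. The requirement $\lambda>(2+3L)^2=\sup\psi^2$-type is exactly what keeps the quadratic term $(|\nabla f|^2-f_t)^2/n$ coercive after rescaling. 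Solving the resulting quadratic inequality for $F$ then gives the bound with constants $C_1,C_2$ of the stated form. At a boundary maximum, the choice of $\psi'(0)$ makes $\partial_\nu F<0$, contradicting maximality, so the maximum must be interior (or at $t=0$, where $F=0$).

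\textbf{Integration step.} Given the differential inequality, I would take a minimizing geodesic $\gamma$ from $x_2$ to $x_1$ (in $\mathbf M$, which may touch the boundary; a path of length close to $d$ suffices). Along $\eta(s)=\bigl(\gamma(s),\,(1-s)t_2+st_1\bigr)$ one has
\[
\frac{d}{ds}f(\eta(s))\le |\nabla f|\,d-(t_2-t_1)f_t\le \frac{\lambda d^2}{4(t_2-t_1)}+\frac{t_2-t_1}{\lambda}\Bigl(\frac{C_1'}{t}+C_2'\Bigr),
\]
using $|\nabla f|^2\le \lambda f_t+\dots$ together with Young's inequality. Integrating in $s$ gives the factor $(t_2/t_1)^{C_1}$ and the term $C_2(t_2-t_1)$.

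The main obstacle is the construction of $\psi$ together with the bookkeeping in the maximum principle computation: one must control $\nabla^2 r$ near a nonconvex $C^2$ boundary via the rolling ball, and track how $\psi$, $\nabla\psi$ and $\Delta\psi$ interact with $\lambda$ and $\theta$ so as to produce the explicit powers of $(2+3L)$.
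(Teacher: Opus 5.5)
The paper does not prove this lemma. It quotes Wang's Theorem~3.1, with his constant $H$ replaced by $3L+1$. Your overall route is the standard one behind that result: a boundary-weighted Li--Yau inequality via the maximum principle, then integration along a near-minimizing path. Your integration step is fine. The gap is in the step you yourself call the crux: your weight has the wrong monotonicity, so boundary maxima are not excluded. For $F=t(\psi|\nabla f|^2-\lambda f_t)$ one has $\partial_\nu f_t=\partial_t\partial_\nu f=0$ on $\partial\mathbf M$ and $\partial_\nu r=-1$. So at a boundary point
\[
\partial_\nu F=t\bigl[-\psi'(0)\,|\nabla f|^2-2\psi(0)\,\mathrm{II}(\nabla f,\nabla f)\bigr]\le t\,|\nabla f|^2\bigl(-\psi'(0)+2L\,\psi(0)\bigr).
\]
To contradict $\partial_\nu F\ge 0$ you need $\psi'(0)>2L\psi(0)>0$. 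That is, the weight must be smallest on $\partial\mathbf M$ and increase inward. Your choice ($\psi=3L+1$ on $\partial\mathbf M$, $\psi\equiv 1$ inside, $-\psi'(0)$ large) makes $\partial_\nu\psi=-\psi'(0)$ large and positive. Then $\partial_\nu F>0$ is perfectly compatible with a maximum on the boundary, and the argument collapses.

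What works is the construction the paper itself uses in Section~3. Take $\phi=0$ on $\partial\mathbf M$ with $\partial_\nu\phi=-H/R$, and $\phi\equiv H=3L+1$ for $r\ge R$. Put a weight such as $(1+\phi)^2$ on $|\nabla f|^2$. Then $\partial_\nu(1+\phi)^2+2L=-2H/R+2L<0$ on $\partial\mathbf M$, since $R\le 1$. The weight takes values in $[1,(2+3L)^2]$, so it is $\ge 1$, as your integration step requires. The quartic term $(|\nabla f|^2-f_t)^2/n$ must absorb the cubic term $|\nabla f|^2\langle\nabla f,\nabla(1+\phi)^2\rangle$. That requires $\lambda$ to exceed the supremum of the weight, i.e.\ $\lambda>(2+3L)^2$. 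This is exactly where the threshold in the statement comes from; it cannot arise from your $\psi\in[1,3L+1]$.

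Since the weight is now nondecreasing in $r$, a lower bound on its Laplacian needs only a lower bound on $\Delta r$ in the collar. The rolling ball gives this through the barrier $R/2-d(\cdot,q)\le r$ and Laplacian comparison under $\operatorname{Ric}\ge -K$. Your claimed Hessian bound $\nabla^2 r\ge -C(n,K)/R$ would need sectional curvature control, and it is not needed.

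Finally, your sketch only yields constants ``of the same structure''. The explicit $C_1,C_2$ in the statement come from Wang's specific bookkeeping, such as the $\theta$-weighted Young splitting. You would have to reproduce that bookkeeping to obtain them exactly.
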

We are now ready to give a
\begin{proof}[Proof of Theorem \ref{Th: ham estimate}]
	For \(x\in \mathbf{M}\), let \(r(x)=d(x,\partial \mathbf{M})\) be the distance from \(x\) to the boundary. As in the corresponding constructions in \cite[p.~964-966]{ Chen} and \cite[p.~384, 386]{Jiaping Wang}, we choose \(R>0\) sufficiently small so that \(r(x)\) is smooth in the collar neighborhood \(\{r(x)<R\}\).
	Define a function $\phi(x) = \psi\left(\frac{r(x)}{R}\right)$
	on $\bf{M}$, where \(\psi(r)\) is a nonnegative $C^2$ function defined on \([0,\infty)\) satisfying, for $H=3L+1$
	\begin{equation*}
		\begin{cases}
			\psi(r)\le H&\quad \text{if}\,\,r \in [0, \frac{1}{2}], \\
			\psi(r)= H&\quad \text{if}\,\,r \in [1, \infty ),
		\end{cases}
	\end{equation*}
	and \[
	\psi(0)=0, \quad 0 \leq \psi'(r) \leq 2H, \quad \psi'(0)=H\quad \text{and}\quad \psi''(r) \geq -4H.
	\]
	
	We prove the gradient estimate by using the layered approach in a time interval. For any $t\in (0, 1],$ there exists a fixed $T\in(0, 1]$ such that $t\in [T/2, T]$. Therefore, it is enough to carry out the proof in the time interval $[T/2, T]$ for any fixed $T\in (0, 1]$. To this end, we consider the function
	\[
	Q(x, t):= \left (t-\dfrac{T}{2}\right )\left(1+\phi(x)\right)\dfrac{|\nabla u|^2}{u} - \alpha u\ln \frac{A_T}{u}
	\]
	on $\mathbf M \times[T/2,T]$, where $A_T
	:=
	e\sup_{\mathbf M\times[T/2,T]}u$, $\alpha>0$ will be chosen sufficiently large below.
Since \(Q\) is continuous on the compact set
\(\mathbf M\times[T/2,T]\), there exists
$(x_0,t_0)\in
 \mathbf M\times[T/2,T]
$
 at which \(Q\) attains its maximum. We may assume that
$
Q(x_0,t_0)>0,
$
since otherwise the desired estimate follows immediately.
At \(t=T/2\), we have
\[
 Q(x,T/2)
 =
-\alpha u(x,T/2)
 \ln\frac{A_T}{u(x,T/2)}
 <0.
\]
 Hence \(t_0>T/2\).

 To establish the gradient estimate, we consider two cases: either $x_0 \in \partial \mathbf M$ or $x_0 \notin \partial \mathbf M$.

	\textbf{Case 1: }$x_0\in\partial \mathbf M$.

	Suppose first that \(x_0\in\partial \mathbf M\), and let $\nu$ be the outward unit normal vector field on $\partial \mathbf M$. Since \(Q(\cdot,t_0)\) attains its maximum over \(\mathbf M\) at \(x_0\), its outward normal derivative satisfies
	\[
	\dfrac{\partial Q}{\partial \nu}(x_0,t_0)\ge0.
	\]
	
	Let $e_1, e_2,..., e_n$ be a local orthonormal frame
	field in a neighborhood of  $x_0$ with $e_n=\nu$ along $\partial \mathbf M$. At $(x_0, t_0)$, we have
	\begin{equation*}
		\begin{split}
			\dfrac{\partial Q}{\partial \nu}
			&=\dfrac{\partial }{\partial \nu}\left [\left (t-\dfrac{T}{2}\right )(1+\phi)\dfrac{|\nabla u|^2}{u} - \alpha u\ln \frac{A_T}{u}\right ]\\
			&=\left (t-\dfrac{T}{2}\right )\left [\left (\dfrac{\partial\phi}{\partial \nu}\right ) \dfrac{|\nabla u|^2}{u}+(1+\phi)\dfrac{\partial }{\partial \nu}\left(\dfrac{|\nabla u|^2}{u}\right)\right ].\\
		\end{split}
	\end{equation*}
	Since $\frac{\partial u}{\partial \nu} = 0$ on $\partial M$, it follows that  $\frac{\partial}{\partial \nu}(\alpha u \ln \frac{A_T}{u})=0$ and $\frac{\partial}{\partial \nu}(u^{-1}) = 0$.
	Since $x_0\in\partial M$, we have $r(x_0)=0, \phi (x_0)=0$ and $\dfrac{\partial r}{\partial \nu} \big|_{x_0}=-1$. Consequently,
	$$
	\frac{\partial\phi}{\partial\nu}(x_0) = \left. \frac{\partial\psi\left(\frac{r(x)}{R}\right)}{\partial \nu} \right|_{x=x_0} = \psi'(0)\cdot\frac{1}{R}\cdot \frac{\partial r(x_0)}{\partial \nu} = -\frac{H}{R}.
	$$
	Next, by a direct computation, one shows that
	\small{
	\[
	\frac{\partial}{\partial\nu} (|\nabla u|^2)
	= 2 \sum_{i=1}^{n} u_i u_{i\nu}
	= 2 \sum_{i=1}^{n-1} u_i u_{i\nu}
	=-2 \mathrm{II}_{\partial \mathrm{\bf{M}}}(\nabla u, \nabla u)
	\leq 2L|\nabla u|^2.
	\]}
	Substituting these estimates into  $\frac{\partial Q}{\partial \nu}$ yields
	\[
	\dfrac{\partial Q}{\partial \nu}
	\leq \left (t-\dfrac{T}{2}\right )\frac{|\nabla u|^2}{u}
	\left(-\frac{H}{R}+2L\right).
	\]
Since \(0<R\le1\), $H=3L+1$, we have
	\[
	-\frac{H}{R}+2L
	\le -(3L+1)+2L<0.
	\]
	Since \(Q(x_0,t_0)>0\), we necessarily have \(|\nabla u(x_0,t_0)|>0\). Together with \(t_0>\dfrac{T}{2}\), this implies
	 $\dfrac{\partial Q}{\partial \nu}(x_0, t_0)< 0$, which contradicts the fact that $
	 \dfrac{\partial Q}{\partial \nu}(x_0,t_0)\ge0.
	 $
	 Consequently, the positive maximum of $Q$ must be attained at a point $(x_0,t_0)\in\mathbf M^\circ \times (T/2,T]$, where $\mathbf M^\circ$ is the interior of $\mathrm{\bf{M}}$. So we come to the:
	
	\text{\bf Case 2: }$x_0 \notin \partial \mathbf M$.
	
	For convenience, we define
	$$\mathcal{L}:=\Delta-\partial_t-2\left\langle  \frac{\nabla\phi}{1+\phi}, \nabla \right\rangle,\quad \tau:=t-\dfrac{T}{2}.
	$$
	We first compute $\mathcal LQ$. A direct computation gives
	\begin{equation}
		\begin{aligned}	
			&\mathcal{L}\left (\tau(1+\phi)\dfrac{|\nabla u|^2}{u}\right )\\
			=&\left(\Delta-\partial_t-2\left\langle \frac{\nabla\phi}{1+\phi},  \nabla\right\rangle \right)\left(\tau(1+\phi)\dfrac{|\nabla u|^2}{u}\right)  \\
			=&\tau\Delta\left [(1+\phi)\dfrac{|\nabla u|^2}{u}\right ]
			- (1+\phi )\dfrac{|\nabla u|^2}{u}-
			\tau \partial_t \dfrac{|\nabla u|^2}{u}-2\tau\dfrac{|\nabla \phi |^2}{1+\phi}\dfrac{|\nabla u|^2}{u}
			-2\tau\langle\nabla \phi , \nabla \dfrac{|\nabla u|^2}{u}\rangle \\		
			=&\tau\Delta \phi\dfrac{|\nabla u|^2}{u}
			+\tau (1+\phi)\Delta\dfrac{|\nabla u|^2}{u}
			+2\tau \langle\nabla \phi , \nabla \dfrac{|\nabla u|^2}{u}\rangle\\
			&-(1+\phi )\dfrac{|\nabla u|^2}{u}
			- \tau (1+\phi )\partial_t \dfrac{|\nabla u|^2}{u}
			-2\tau\frac{|\nabla \phi |^2}{1+\phi}\dfrac{|\nabla u|^2}{u}
			-2\tau \langle\nabla \phi , \nabla \dfrac{|\nabla u|^2}{u}\rangle\\
			=&\tau (1+\phi)( \frac{2}{u}
			\left|
			\nabla^2u-
			\frac{\nabla u\otimes\nabla u}{u}
			\right|^2
			+
			\frac{2}{u}
			\operatorname{Ric}(\nabla u,\nabla u))
			+\dfrac{|\nabla u|^2}{u}\left(\tau\Delta \phi-(1+\phi)-2\tau\frac{|\nabla \phi |^2}{1+\phi} \right)\\
		\geq&\left(\tau\Delta \phi-(1+\phi)-2\tau\frac{|\nabla \phi |^2}{1+\phi}-2K\tau (1+\phi) \right)\dfrac{|\nabla u|^2}{u}.
		\end{aligned}
		\label{step1}
	\end{equation}
On the other hand,
	\begin{equation}
		\begin{aligned}
			&\mathcal{L}\left (\alpha u\ln \frac{A_T}{u}\right )\\
			=&\left(\Delta-\partial_t-2\left\langle \frac{\nabla\phi}{1+\phi}, \nabla\right\rangle \right)\left (\alpha u\ln \frac{A_T}{u}\right) \\
			=& \alpha \Delta u(\ln \frac{A_T}{u}-1)
			-\alpha\dfrac{|\nabla u|^2}{u}
			-\alpha  \partial_t u(\ln \frac{A_T}{u}-1)
			-2\alpha\dfrac{\langle \nabla \phi, \nabla u \rangle}{1+\phi}\ln \frac{A_T}{u}
			+2\alpha\frac{\langle \nabla \phi, \nabla u \rangle}{1+\phi}\\
			\leq&-\alpha\dfrac{|\nabla u|^2}{u}
			+ 2\alpha\dfrac{|\nabla\phi|\cdot|\nabla u|}{1+\phi}\ln \frac{A_T}{u}
			+ 2\alpha\frac{|\nabla\phi|\cdot|\nabla u|}{1+\phi} \ln \frac{A_T}{u} \\
			\leq& -\alpha\dfrac{|\nabla u|^2}{u}
			+ 4\alpha\dfrac{|\nabla\phi|}{1+\phi}\cdot\frac{|\nabla u|}{\sqrt{u}}\cdot\sqrt{u}\ln \frac{A_T}{u} \\
			\leq& -\alpha\dfrac{|\nabla u|^2}{u}+\frac12\alpha\dfrac{|\nabla u|^2}{u}
			+8\alpha\dfrac{|\nabla\phi|^2}{(1+\phi)^2}
			u(\ln \frac{A_T}{u})^2,
		\end{aligned}
		\label{step2}
	\end{equation}
	where we have used
	$\ln(A_T/u)\ge1$ and Young's inequality. Combining \eqref{step1} and \eqref{step2}, we obtain
\[
	\begin{aligned}
	\mathcal{L}
	\left (\tau(1+\phi)\dfrac{|\nabla u|^2}{u}
	- \alpha u\ln\frac{A_T}{u}\right)
	&\geq
	\left[\left(\tau\Delta \phi-(1+\phi)-2\tau\frac{|\nabla \phi |^2}{1+\phi} \right)-2K\tau (1+\phi)\right]\dfrac{|\nabla u|^2}{u}\\
	&\quad+\frac{\alpha}{2}\dfrac{|\nabla u|^2}{u}-8\alpha\dfrac{|\nabla\phi|^2}{(1+\phi)^2}
	u(\ln \frac{A_T}{u})^2.
\end{aligned}
\]
	For convenience, set
	\[
	\eta=\eta(x, t)=\frac{\alpha}{2}+\tau\Delta \phi-(1+\phi)-2\tau\frac{|\nabla \phi |^2}{1+\phi}-2K\tau (1+\phi).
	\]
By the construction of the cut-off function, we have
\[0\le \phi\le H,\qquad |\nabla\phi|\le\frac{2H}{R}.
\]
Moreover, the  Laplacian comparison theorem for the boundary distance function \(r(x)=d(x,\partial\mathbf M)\), together with the choice \(H=3L+1\), gives
\[\Delta r\ge -(n-1)(3L+1)\qquad\text{on } 0<r< R,\]
see \cite[p.~384]{Jiaping Wang} and the discussion on the choice of \(R\) in \cite[p.~386]{Jiaping Wang}. See also \cite[p.~964-966]{Chen}, for the analogous construction and the corresponding choice of the boundary collar parameter. Therefore, since
\[
\phi(x)=\psi\left(\frac{r(x)}{R}\right),
\]
and
\[
0\le \psi'\le 2H,
\qquad
\psi''\ge -4H,
\]
we obtain
\[
\begin{aligned}
	\Delta\phi
	&=
	\frac{1}{R^2}\psi''\left(\frac{r}{R}\right)
	+\frac{1}{R}\psi'\left(\frac{r}{R}\right)\Delta r\\
	&\ge
	-\frac{4H}{R^2}
	-\frac{2(n-1)H(3L+1)}{R}.
\end{aligned}
\]
Since $0\le\tau\le T/2\leq1/2$, it follows that
\[
 \begin{aligned}
 	\tau\Delta\phi-(1+\phi)
	 -2\tau\frac{|\nabla\phi|^2}{1+\phi}&\ge-(1+H)
-\frac{T(n-1)H(3L+1)}{R}
-\frac{2TH}{R^2}-\frac{4TH^2}{R^2}\\
&\ge-(1+H)
-\frac{(n-1)H(3L+1)}{R}
-\frac{2H(1+2H)}{R^2}.
	\end{aligned}
\]
Furthermore, combining with
\[
-2K\tau(1+\phi)
\ge - K(1+H),
\]
we deduce
\begin{equation}\label{ineq:eta bd}
\eta
\ge
\frac{\alpha}{2}
-K(1+H)
-(1+H)
-\frac{(n-1)H(3L+1)}{R}
-\frac{2H(1+2H)}{R^2}.
\end{equation}
	With this notation, it follows that
	\[
	\mathcal{L}Q\geq \eta\dfrac{|\nabla u|^2}{u} - 8\alpha\dfrac{|\nabla\phi|^2}{(1+\phi)^2}u(\ln \frac{A_T}{u})^2.
	\]
For later use, we choose $\alpha>0$ sufficiently large such that
$\eta>0$. More precisely, it suffices to take

	\[
	\alpha>
	2\left[
	 K(1+H)
	+(1+H)
	+\frac{(n-1)H(3L+1)}{R}
	+\frac{2H(1+2H)}{R^2}
	\right].
\]

Since \((x_0,t_0)\in \mathbf M^\circ \times (T/2,T]\) is a maximum point of \(Q\),
at this point we have
	\[
	\nabla Q(x_0,t_0)=0,\qquad
	\Delta Q(x_0,t_0)\le0,\qquad
	\partial_tQ(x_0,t_0)\ge0.
	\]
Hence
	\[
	\mathcal{L}Q(x_0,t_0)= \left(\Delta-\partial_t-2\left\langle \frac{\nabla\phi}{1+\phi}, \nabla\right\rangle \right)Q\leq 0.\]
	Since \(Q(x_0,t_0)>0\), i.e.,
\[(t_0-\dfrac{T}{2})(1+\phi)\dfrac{|\nabla u|^2}{u} - \alpha u\ln \frac{A_T}{u}> 0,
	\]
	then \[
	8\alpha\dfrac{|\nabla\phi|^2}{(1+\phi)^2}u(\ln \frac{A_T}{u})^2\geq \eta\dfrac{|\nabla u|^2}{u}> \frac{\eta}{\tau_0(1+\phi)}\alpha u\ln \frac{A_T}{u},
	\]
	where $\tau_0=t_0-\dfrac{T}{2}$.
	Moreover, we find
	\[
	\frac{\eta}{\tau_0(1+\phi)}\alpha u\ln \frac{A_T}{u}-8\alpha\dfrac{|\nabla\phi|^2}{(1+\phi)^2}u(\ln \frac{A_T}{u})^2=\frac{\alpha u}{1+\phi}\ln\frac {A_T}{u}\left( \frac{\eta}{\tau_0}
	-
	8\dfrac{|\nabla\phi|^2}{(1+\phi)}\ln\frac {A_T}{u}\right) < 0.
	\]
	Since
	\[
	u>0
	\qquad\text{and}\qquad
	\ln\frac {A_T}{u}\ge1,
	\]
	we may divide the above by
	\[
	\frac{\alpha u}{1+\phi}\ln\frac{A_T}{u}
	\]
	to obtain
	\[
	\frac{\eta}{\tau_0}
	-
	8\frac{|\nabla\phi|^2}{1+\phi}
	\ln\frac{A_T}{u}
	<0.
	\]
	Define
	\[
	\beta
	:=
	8\sup_{\mathbf M}
	\frac{|\nabla\phi|^2}{1+\phi}\leq\frac{32H^2}{R^2}.
	\]
	Then
	\begin{equation}\label{ineq:contra}
	\frac{\eta}{\tau_0}
	-
	\beta\ln\frac {A_T}{u}
	<0
\end{equation}
	at \((x_0,t_0)\).
	
	We now derive a contradiction at the positive maximum point $(x_0,t_0)$.
	Set
	\[
	M_T
	:=
	\sup_{\mathbf M\times[T/2,T]}u,
	\]
	so that $A_T=eM_T$. By the parabolic maximum principle for the Neumann heat equation,
	 \[
	\max_{\mathbf M}u(\cdot,s)
	\le
	\max_{\mathbf M}u(\cdot,T/4),
	\qquad T/2 \le s \le T.
	\]
Choose $y\in \mathbf M$, such that $u\left(y,\frac T4\right)=\max_{\mathbf M}u(\cdot,T/4)$.
	
	By Lemma~\ref{lem:Harnack}, applied with
	\[
	(x_1,t_1)=\left(y,\frac T4\right),
	\qquad
	(x_2,t_2)=(x,t),
	\]
	and with
	$
	\theta=\frac14, \lambda=2(2+3L)^2,
	$
	we obtain
	\[
M_T
\le u\left(y,\frac T4\right)\le
u(x,t)
\left(\frac{4t}{T}\right)^{C_1}
\exp\left(
\frac{\lambda d^2(x, y)}{4(\tau+T/4)}
+C_2(\tau+T/4)
\right)
	\]
	for $T/2\leq t\le T$, where
$
	C_1=C_1(n,L), C_2=C_2(n,K,L,R)
$
	are the corresponding constants in Lemma~\ref{lem:Harnack}.
Let
\[
D:=\operatorname{diam}(\mathbf M).
\]
Since $t\le T$ and $d(x, y)\le D$, it follows that
\begin{equation}
\ln\frac{M_T}{u(x,t)}\le
C_1\ln 4
+\frac{\lambda D^2}{4(\tau+T/4)}
+C_2 (\tau+T/4).
\end{equation}
By $A_T=eM_T$, we therefore have
\begin{equation}\label{ineq: lnMu}
	\ln\frac{A_T}{u(x,t)}\le 1+
	C_1\ln 4
	+\frac{\lambda D^2}{4(\tau+T/4)}
	+C_2 (\tau+T/4).
\end{equation}
Recall that $\tau=t-\frac{T}{2}$. Since $T/2\leq t\le T$, $0<T\le1$, we get
\[
0\leq \tau \leq T/2\leq\frac{1}{2}, \quad T/4\leq \tau+T/4\le\frac{3T}{4}\le \frac{3}{4}.
\]
Multiplying \eqref{ineq: lnMu} by \(\tau\), we obtain

\begin{equation}
\begin{aligned}\label{ineq:lnGbound}
		\tau\ln\frac{A_T}{u(x,t)}&\le (1+
	C_1\ln 4)\tau
	+\frac{\tau\lambda D^2}{4(\tau+T/4)}
	+C_2\tau(\tau+T/4)\\
	&\le
\frac{1}{2}(1+C_1\ln 4)
+\lambda D^2
+\frac{3C_2}{8}.
\end{aligned}
\end{equation}
On the other hand, from \eqref{ineq:eta bd},
it suffices to choose $\alpha$ such that
\[
\begin{aligned}
	\alpha>&
	2K(1+H)+2(1+H)+\frac{2(n-1)H(3L+1)}{R}+\frac{4H(1+2H)}{R^2}\\
	&+ \beta\left( 2\lambda D^2+\frac{3C_2}{4}+1+C_1\ln4\right).
\end{aligned}
\]
With this choice,
\[
\eta>
\beta\left( \frac{1}{2}(1+C_1\ln 4)+\lambda D^2
+\frac{3C_2}{8}\right)
 >0.
\]
Combining this inequality with \eqref{ineq:lnGbound}, we obtain
\[
\frac{\eta}{\tau}
-\beta\ln\frac{A_T}{u(x,t)}
>0,\qquad x\in\mathbf M,\quad T/2\leq t\le T.
\]

	In particular, at the interior positive maximum point $(x_0,t_0)$ of $Q$, the above inequality contradicts \eqref{ineq:contra} obtained from the maximum principle. Therefore, $Q$ cannot attain a positive maximum on $\mathbf M\times[T/2,T]$. Hence
	\[
	Q(x,t)\le0
	\]
	for all $(x,t)\in \mathbf M\times[T/2,T]$. That is,
	\[
	\left(t-\frac T2\right)
	(1+\phi)
	\frac{|\nabla u|^2}{u}
	\le
	\alpha u\ln\frac{A_T}{u}.
	\]
	Since $1+\phi\ge1$, $T\in(0,1]$ is arbitrary and $A_T\le A$, we conclude
	\[
	\frac{t|\nabla u|^2}{u}
	\le 2
	\alpha u\ln\frac Au.
	\]
\end{proof}

\section{\bf{Gradient Bounds, Density of Entropy and Fisher Information Bound under Integral Curvature Condition}}
In this section we prove Theorem~\ref{Integral GE}, whose statement is given in the introduction.
\begin{proof}[Proof of Theorem \ref{Integral GE}]
	The main ingredient of the proof is a quotient type auxiliary function adapted to the heat kernel representation. Let $J=J(x,t)>0$ be an auxiliary function to be
	specified below and set
	\[
	F = J \frac{|\nabla u|^2}{w},
	\]
	where $w$ and $u$ are defined in the statement of the theorem. Set $\mathcal{P}= \Delta - \partial_t$. We first derive the evolution equation for $F$. The quotient structure allows us to compare the evolution of $|\nabla u|^2$ directly with that of $w$, while the auxiliary factor $J$ will be chosen to absorb the bad terms arising from the Bochner formula under the integral Ricci curvature assumption.
	
	By a direct computation, we obtain
	\begin{equation}\label{eq:th5 1}
		\mathcal{P}|\nabla u|^2= 2|\nabla^2 u|^2 + 2\,\mathrm{Ric}(\nabla u,\nabla u).
\end{equation}
	\[
	\mathcal{P}w = 0, \qquad \mathcal{P}\!\left(\frac{1}{w}\right) = \frac{2|\nabla w|^2}{w^3}.
	\]
	\begin{equation}\label{eq:th5 3}
		\begin{aligned}
			\mathcal{P}\!\left(\frac{|\nabla u|^2 }{w}\right) &= \frac{\mathcal{P}|\nabla u|^2 }{w} + |\nabla u|^2 \,\mathcal{P}\!\left(\frac{1}{w}\right) + 2 \left\langle \nabla\!\left(\frac{1}{w}\right), \nabla |\nabla u|^2\right\rangle   \\
			&= \frac{\mathcal{P} |\nabla u|^2 }{w} + \frac{2|\nabla u|^2 |\nabla w|^2}{w^3} - \frac{2}{w^2}\,\left\langle \nabla w, \nabla |\nabla u|^2\right\rangle.
		\end{aligned}
	\end{equation}
		\begin{equation}\label{eq:th5 4}
	\mathcal{P}F = \frac{|\nabla u|^2}{w}\,\mathcal{P}J + J\,\mathcal{P}\!\left(\frac{|\nabla u|^2}{w}\right) + 2\,\left\langle \nabla J, \nabla\!\left(\frac{|\nabla u|^2}{w} \right)\right\rangle.
		\end{equation}
	Observe that
	
		\begin{equation}\label{eq:th5 5}
	\nabla\!\left(\frac{|\nabla u|^2}{w}\right) = \frac{\nabla |\nabla u|^2}{w}
	- \frac{|\nabla u|^2}{w^2}\nabla w.
		\end{equation}
	Substituting \eqref{eq:th5 3} and \eqref{eq:th5 5} into \eqref{eq:th5 4} yields
\begin{equation}\label{eq:th5 6}
	\begin{split}
		\mathcal{P}F
		&= \frac{|\nabla u|^2}{w}\,\mathcal{P}J
		+ J\left( \frac{\mathcal{P}|\nabla u|^2}{w}
		+ \frac{2|\nabla u|^2|\nabla w|^2}{w^3}
		- \frac{2}{w^2}\left\langle \nabla w, \nabla |\nabla u|^2\right\rangle  \right) \\
		&\quad + 2\left\langle
		\nabla J,
		\frac{\nabla |\nabla u|^2}{w}
		-\frac{|\nabla u|^2}{w^2}\nabla w
		\right\rangle.
	\end{split}
\end{equation}
Substituting \eqref{eq:th5 1} into \eqref{eq:th5 6}, we obtain
	\begin{equation}
		\begin{aligned}
			\mathcal{P}F &= \frac{|\nabla u|^2}{w}\,\mathcal{P}J + \frac{2J}{w}|\nabla^2 u|^2 + \frac{2J}{w}\,\mathrm{Ric}(\nabla u,\nabla u) \\
			&+ \frac{2J|\nabla u|^2|\nabla w|^2}{w^3} - \frac{2J}{w^2}\left\langle \nabla w, \nabla |\nabla u|^2\right\rangle  + \frac{2}{w}\left\langle \nabla J, \nabla |\nabla u|^2\right\rangle  - \frac{2|\nabla u|^2}{w^2}\left\langle \nabla J, \nabla w\right\rangle  .
		\end{aligned}\label{eq:th5 7}
	\end{equation}
By the definition of \(F\), we have
\[
\nabla F=(\frac{|\nabla u|^2}{w})\nabla J+J \nabla\left(\frac{|\nabla u|^2}{w}\right) .
\]
	Substituting \eqref{eq:th5 5} into this identity yields
	\[
	\nabla F = (\frac{|\nabla u|^2}{w})\nabla J + J \left (\frac{\nabla |\nabla u|^2}{w}
	- \frac{|\nabla u|^2}{w^2}\nabla w\right).
	\]
	Hence,
	\[
	2\left\langle \frac{\nabla w}{w}, \nabla F\right\rangle  =
	\frac{2|\nabla u|^2}{w^2}\left\langle \nabla J, \nabla w\right\rangle
	+\frac{2J}{w^2}\left\langle \nabla w, \nabla |\nabla u|^2\right\rangle
	-\frac{2J|\nabla u|^2|\nabla w|^2}{w^3}.
	\]
	Combining this with \eqref{eq:th5 7}, we obtain
		\begin{equation}\label{eq:th5 8}
	\mathcal{P}F+ 2\left\langle \frac{\nabla w}{w}, \nabla F\right\rangle  = \frac{|\nabla u|^2}{w}\,\mathcal{P}J + \frac{2J}{w}|\nabla^2 u|^2 + \frac{2J}{w}\,\mathrm{Ric}(\nabla u,\nabla u)+\frac{2}{w}\left\langle \nabla J, \nabla |\nabla u|^2\right\rangle .
\end{equation}
	
	Set $V = |\mathrm{Ric}^-|$ so that $\mathrm{Ric}(\nabla u,\nabla u) \ge -V |\nabla u|^2.$ Then
	\[
	\frac{2J}{w}\,\mathrm{Ric}(\nabla u,\nabla u) \ge -\frac{2J}{w} V |\nabla u|^2 = -2VJ\frac{|\nabla u|^2}{w} = -2VF.
	\]
	Applying Young's inequality, we get
	\[
	\frac{2}{w}\left\langle \nabla J, \nabla |\nabla u|^2\right\rangle  \ge \frac{1}{w}\left( -\frac{4}{5}\delta J|\nabla^2 u|^2
	- \frac{5}{\delta }\frac{|\nabla J|^2}{J}|\nabla u|^2\right),
	\]
	where \(\delta\) is an arbitrary positive constant.
	
	Substituting these estimates into the previous identity \eqref{eq:th5 8} yields
	\[
	\mathcal{P}F+ 2\left\langle \frac{\nabla w}{w}, \nabla F\right\rangle \geq
	\frac{J}{w}(2- \frac{4}{5}\delta)|\nabla^2 u|^2
	+\frac{|\nabla u|^2}{w}\,\mathcal{P}J
	-2VJ\frac{|\nabla u|^2}{w}
	- \frac{5}{\delta}\frac{|\nabla J|^2}{J}\frac{|\nabla u|^2}{w}.
	\]
	For any $0< \delta \leq \frac{5}{2}$, following the method of \cite{Zhang and Zhu}, let $J$ be a function satisfying the nonlinear parabolic equation
	\begin{equation*}
		\begin{cases}
			\partial_t J = \Delta J - 2V J - \frac{5}{\delta} \frac{|\nabla J|^2}{J}, & (x,t)\in M\times(0,\infty),\\[4pt]
			J(x,0)=1.
		\end{cases}
	\end{equation*}
	We have
	\[
	\mathcal{P}F+ 2\left\langle \frac{\nabla w}{w}, \nabla F\right\rangle  \geq 0.
	\]
	
Without loss of generality we can and do assume $w>\epsilon>0$ and let $\epsilon \to 0$ in the sequel. The maximum principle implies
\[
F(x,t)\le1,
\qquad (x,t)\in\mathbf M\times[0,T].
\]

Using the lower bound
\[
J(x,t)\ge\underline J(t)>0,
\]
we finally obtain
\begin{equation}\label{Integral GE1}
|\nabla u(x,t)|^2
\le
\frac{1}{J(x,t)}\,w(x,t)
\le
\frac{1}{\underline J(t)}\,w(x,t),
\end{equation}
which is the desired estimate.
\end{proof}
\begin{remark}
		When $\operatorname{Ric}\ge0$, one has $V=|\operatorname{Ric}^-|\equiv0$ and the
	equation for $J$ admits the solution $J\equiv1$.  In this case,
	the preceding gradient estimate reduces to the sharp semigroup gradient estimate
	\[
	|\nabla P_tf|^2\le P_t(|\nabla f|^2)
	\]
	for any $f \in C^{\infty}(\mathbf{M})$, where $P_t$ denotes the heat semigroup.
\end{remark}

\begin{remark}
		Let
	\[
	B:=a-1=5\delta^{-1}-1>0,
	\qquad
	q:=\frac{2p}{2p-n}.
	\]
	Under condition (a) of \cite[Theorem 1.1]{Zhang and Zhu}, the auxiliary function $J$ satisfies
	\[
	\underline J(t)\le J(x,t)\le 1,
	\]
	where
	\[
	\underline J(t)
	=
	2^{-1/B}
	\exp\!\left(
	-B^{\frac{n}{2p-n}}
	\bigl[4(\sigma\hat C(t))^{1/p}\bigr]^q\,t
	\right),
	\]
	and $\hat{C}(t)$ is the increasing function appearing in the Gaussian heat kernel estimate.
	Consequently,
	\[
	1\le \frac1{J(x,t)}
	\le
	\frac1{\underline J(t)}
	=
	2^{1/B}
	\exp\!\left(
	B^{\frac{n}{2p-n}}
	\bigl[4(\sigma\hat C(t))^{1/p}\bigr]^q\,t
	\right).
	\]
		The behavior of this bound is different in the short and
	long time.  Indeed, under the integral Ricci curvature
	bound and the noncollapsing assumption, the Gaussian heat kernel
	estimate can be taken with
	\[
	\hat C(t)\le \hat C_1,
	\qquad 0<t\le1,
	\]
	for some constant $\hat C_1>0$.  Hence, for $0<t\le1$,
	\[
	\frac1{\underline J(t)}
	\le
	2^{1/B}
	\exp\!\left(\tilde{C}t\right),
	\qquad
	\tilde{C}=
	B^{\frac{n}{2p-n}}
	\bigl[4(\sigma\hat C_1)^{1/p}\bigr]^q.
	\]
	Thus the global lower bound for $J$ remains uniformly positive
	on every bounded short time interval.
	
	A sharper short time estimate follows from the fixed point
	argument used in the construction of $J$.  More precisely, under
	the assumption $|\operatorname{Ric}^-|\in L^p$, $p>n/2$, one has
	\begin{equation}\label{inte:heat kernel}
	\int_0^t\int_M
	G(x,t;y,s)|\operatorname{Ric}^-|(y)\,d\mathrm{vol}_g(y)\,ds
	\le C_0t^{\,1-\frac{n}{2p}},
	\end{equation}
	where \(C_0>0\) depends only on \(n,p\) and the constants in the
	Gaussian heat kernel and volume estimates, as well as on
	\(\|\operatorname{Ric}^-\|_{L^p}\). In particular,
	\[
	C_0=C(n,p,\rho,\sigma).
	\]
	 Set $h=J^{-B}$. By the Duhamel formula,
	 together with \eqref{inte:heat kernel}, we obtain that for every $\eta\in(0,1)$,
	\[
	1\le h(x,t)\le1+\eta
	\]
	on a
	sufficiently short time interval $[0,T_\eta]$,
	and therefore
	\[
	(1+\eta)^{-1/B}\le J(x,t)\le1.
	\]
	In particular, the auxiliary function \(J(x,t)\) converges uniformly to its initial value \(1\) as \(t\to0^+\), and the corresponding short time gradient estimate becomes asymptotically sharp.
	
	On the other hand, for large time the function $\hat C(t)$ in
	the global Gaussian upper bound is only known in general to be
	increasing and to satisfy $\hat C(t)\to\infty$ as $t\to\infty$.
	Accordingly, the explicit quantity $1/\underline J(t)$ may grow
	rapidly for large $t$.
	
\end{remark}

As a direct and significant application of this gradient estimate, we are able to control the evolution of the Fisher information. Utilizing the estimate above, we deduce the following Fisher information bound. We now turn to the proof of Theorem~\ref{th:Fisher info}, which is inspired by \cite[Theorem 2]{CFM}.

\begin{proof}[Proof of Theorem \ref{th:Fisher info}]
	The Fisher information is defined by
	\begin{equation*}
			\mathcal{I}(t)=\int_{\mathbf{M}}\frac{|\nabla_x u(x,t)|^{2}}{u(x,t)}\,d\mathrm{vol}_g(x).
	\end{equation*}
Since
\[
\nabla u=u\nabla\ln u,
\]
we can rewrite
\begin{equation}\label{eq:It}
\begin{aligned}
	\mathcal I(t)
	&=
	-\int_{\mathbf M}
	|\nabla_x\ln u(x,t)|^2u(x,t)
	\,d\mathrm{vol}_g(x)\\
	&\quad+
	2\int_{\mathbf M}
	\langle\nabla_x\ln u(x,t),\nabla_xu(x,t)\rangle
	\,d\mathrm{vol}_g(x).
\end{aligned}
\end{equation}

	Given an initial density $u_0(x)>0$, we define the finite measure $d\mu(x) =u_0(x)\, d\mathrm{vol}_g(x)$. The density evolved under the heat equation is given by
	$$
	u_t(x) = u(x,t) := \int_{\mathbf{M}} G(x, t, y) \, u_0(y) \, d\mathrm{vol}_g(y),
	$$
	so that
	$d\mu_t=u(x, t)\, d\mathrm{vol}_g(x)$.
	
	Using the symmetry of heat kernel,
	$G(x, t, y) = G(y, t, x)$, together with Fubini's theorem, we obtain
	\begin{equation}\label{term1}
	\begin{aligned}
		&-\int_{\mathbf{M}}|\nabla_x\ln u(x,t)|^{2}u(x,t)\,d\mathrm{vol}_g(x) \\
		= &-\int_{\mathbf{M}}|\nabla_x\ln u(x,t)|^{2}
		\biggl(\int_{\mathbf{M}}G(x, t, y)u_0(y)\,d\mathrm{vol}_g(y)\biggr)d\mathrm{vol}_g(x) \\
		= &-\int_{\mathbf{M}}
		\biggl(\int_{\mathbf{M}}G(y, t, x)|\nabla_x\ln u(x,t)|^{2}\,d\mathrm{vol}_g(x)\biggr)
		u_0(y)\,d\mathrm{vol}_g(y) \\
		= &-\int_{\mathbf{M}}
		\biggl(\int_{\mathbf{M}}G(y, t, x)|\nabla_x\ln u(x,t)|^{2}\,d\mathrm{vol}_g(x)\biggr)
		d\mu(y).
	\end{aligned}
	\end{equation}
	For the second term in \eqref{eq:It}, integration by parts yields
	\[
	2\int_{\mathbf{M}}\langle \nabla_x\ln u(x,t), \nabla_x u(x,t)\rangle \,d\mathrm{vol}_g(x)
	= -2\int_{\mathbf{M}}\Delta_x\bigl(\ln u(x,t)\bigr)\,u(x,t)\,d\mathrm{vol}_g(x).
	\]
	Substituting the heat kernel representation of $u(x,t)$ and applying
	Fubini's theorem once again, we find
	\begin{equation}\label{term2-1}
	\begin{aligned}
		&-2\int_{\mathbf{M}}\Delta_x\bigl(\ln u(x,t)\bigr)u(x,t)\,d\mathrm{vol}_g(x) \\
		= &-2\int_{\mathbf{M}}\Delta_x\bigl(\ln u(x,t)\bigr)
		\biggl(\int_{\mathbf{M}}G(x, t, y)u_0(y)\,d\mathrm{vol}_g(y)\biggr)d\mathrm{vol}_g(x) \\
		= &-2\int_{\mathbf{M}}
		\biggl(\int_{\mathbf{M}}G(y, t, x)\,\Delta_x\bigl(\ln u(x,t)\bigr)\,d\mathrm{vol}_g(x)\biggr)
		d\mu(y).
	\end{aligned}
	\end{equation}
	By integration by parts, the symmetry of the heat kernel, and the heat equation, we have
	\[
	\begin{aligned}
		\int_MG(y,t,x)\Delta_x\ln u(x,t)\,d\mathrm{vol}_g(x)
		&=
		\int_M\Delta_xG(y,t,x)\ln u(x,t)\,d\mathrm{vol}_g(x)\\
		&=
		\int_M\Delta_yG(x,t,y)\ln u(x,t)\,d\mathrm{vol}_g(x)\\
		&=
		\Delta_y\int_MG(y,t,x)\ln u(x,t)\,d\mathrm{vol}_g(x).
	\end{aligned}
	\]
	Substituting this identity into \eqref{term2-1} and integrating
	by parts with respect to $y$, we obtain
		\begin{equation}\label{term2}
	\begin{aligned}
		& -2\int_{\mathbf{M}}
		\Delta_y\Bigl(\int_{\mathbf{M}}G(y, t, x)\ln u(x,t)\,d\mathrm{vol}_g(x)\Bigr)
		u_0(y)\,d\mathrm{vol}_g(y) \\
		= &2\int_{\mathbf{M}}
		\left\langle \nabla_y\Bigl(\int_{\mathbf{M}}G(y, t, x)\ln u(x,t)\,d\mathrm{vol}_g(x)\Bigr)
		, \nabla_y u_0(y)\right\rangle \,d\mathrm{vol}_g(y) \\
		= &2\int_{\mathbf{M}}\left\langle
		\nabla_y\Bigl(\int_{\mathbf{M}}G(y, t, x)\ln u(x,t)\,d\mathrm{vol}_g(x)\Bigr)
		, \nabla_y\ln u_0(y)\right\rangle \, d\mu(y).
	\end{aligned}
	\end{equation}
	Combining \eqref{term1} with \eqref{term2}, we have
	\[
	\begin{aligned}
		\mathcal{I}(t) =
		&-\int_{\mathbf{M}}
		\biggl(\int_{\mathbf{M}}G(y, t, x)|\nabla_x\ln u(x,t)|^{2}\,d\mathrm{vol}_g(x)\biggr)
		d\mu(y) \\
		&+ 2\int_{\mathbf{M}}
		\left\langle \nabla_y\Bigl(\int_{\mathbf{M}}G(y, t, x)\ln u(x,t)\,d\mathrm{vol}_g(x)\Bigr)
		, \nabla_y\ln u_0(y)\right\rangle \,d\mu(y).
	\end{aligned}
	\]
	Using Young's inequality, for any $ \varepsilon > 0$, we obtain
	\begin{equation}\label{last ineq}
		\begin{aligned}
			\mathcal{I}(t) &\leq -\int_{\mathbf{M}}
			\biggl(\int_{\mathbf{M}}G(y, t, x)|\nabla_x\ln u(x,t)|^{2}\,d\mathrm{vol}_g(x)\biggr)
			d\mu(y) \\
			&\quad + \varepsilon \int_{\mathbf{M}} \left |\nabla_y\Bigl(\int_{\mathbf{M}}G(y, t, x)\ln u(x,t)\,d\mathrm{vol}_g(x)\Bigr)\right|^2d\mu(y) \\
			&\quad + \frac{1}{\varepsilon} \int_{\mathbf{M}} |\nabla_y\ln u_0(y)|^2 \, d\mu,
		\end{aligned}
	\end{equation}
	where $d\mu:=u_0(y)\,d\mathrm{vol}_g.$
	For a fixed $t>0$, we apply Theorem~\ref{Integral GE} to the spatial function $f= \ln u(x, t)$. Taking $\varepsilon= \underline{J}(t)$, we obtain
	\[
	\varepsilon\left|\nabla_y \int_{\mathbf{M}} G(y, t, x) \,\ln u(x,t) \, d\mathrm{vol}_g(x)\right|^2
	\le  \int_{\mathbf{M}} G(y, t, x)|\nabla_x\ln u(x,t)|^{2} \, d\mathrm{vol}_g(x).
	\]
Therefore, the first two terms in \eqref{last ineq} cancel, and we
conclude that
	\[
	\mathcal{I}(t) \leq \frac{1}{\underline{J}(t)}\,\mathcal{I}(0).
	\]
	
	This completes the proof.
\end{proof}
\begin{remark}
	Theorem~\ref{Integral GE} is established for closed manifolds,
	and the gradient estimate used in the proof above is based on
	this setting. Therefore, we restrict our
	discussion to the closed manifold case in the present proof.
	
	We remark that the computation of the Fisher information bound
	itself remains valid for the Neumann heat equation on compact
	manifolds with boundary. Indeed, under the Neumann boundary
	condition
	\[
	\partial_\nu u=0
	\quad\text{on }\partial\mathbf M,
	\]
	one also has
	\[
	\partial_\nu\ln u
	=
	\frac{\partial_\nu u}{u}=0.
	\]
	Hence, the boundary terms arising from the integration by parts
	vanish, and the entropy calculation remains valid in the Neumann
	setting. Extending the present Fisher information estimate to this
	case would additionally require the corresponding Neumann version
	of Theorem~\ref{Integral GE}.
\end{remark}

\section*{\bf{Acknowledgement}} The authors wish to thank Professors Lei Ni and Yu Zheng for helpful discussions and encouragement. We should also thank Dr. Xiaohan Cai for telling us that the coefficients in Theorem \ref{th: concave1} (a) can be improved by a result in their paper \cite{CW}. During the exploratory stage of this work, ChatGPT suggested the candidate counterexample appearing in Remark 1.3, assisted the proof of Theorem \ref{th: concave1}  and provided key references. All mathematical arguments were independently verified by the authors.

The first and second authors gratefully acknowledge the support from the National Natural Science Foundation of China (No.~12271163), Key Laboratory of MEA (Ministry of Education), the Science and Technology Commission of Shanghai Municipality (No.~22DZ2229014), and Shanghai Key Laboratory of PMMP (No.~18DZ2271000).


\end{document}